\newcounter{lemma}[section]
\newcounter{corollary}[section]
\newcounter{remark}[section]
\newcounter{theorem}[section]
\newcounter{proposition}[section]
\newcounter{example}
\numberwithin{equation}{section}
\begin{document}

\markboth{\centerline{E.~SEVOST'YANOV, O.~DOVHOPIATYI}}
{\centerline{ON COMPACT CLASSES...}}

\def\cc{\setcounter{equation}{0}
\setcounter{figure}{0}\setcounter{table}{0}}

\overfullrule=0pt

%\normalsize\large

\author{{OLEKSANDR DOVHOPIATYI, EVGENY SEVOST'YANOV}\\}

\title{
{\bf ON COMPACT CLASSES OF SOLUTIONS OF THE DIRICHLET PROBLEM WITH
INTEGRAL RESTRICTIONS }}

\date{\today}
\maketitle

%\large
\begin{abstract}
This article is devoted to the study of the problem of compactness
of solutions of the differential Beltrami equation with
degeneration. We study the case when the complex characteristic of
the equations satisfies the constraints of integral type. In this
case, we have proved a theorem on the compactness of the class of
homeomorphic solutions of the Beltrami equation, which satisfy the
hydrodynamic normalization condition at infinity. Another important
result is the compactness theorem for the class of open discrete
solutions of the Dirichlet problem for the Beltrami equations in the
Jordan domain, the imaginary part of which vanishes at some
predetermined inner point.
\end{abstract}

\bigskip
{\bf 2010 Mathematics Subject Classification: Primary 30C65;
Secondary 35J70, 30C75}

\section{Introduction}
Recently, we have obtained a number of results on the equicontinuity
of families of mappings with direct and inverse Poletsky
inequalities, see, e.g.~\cite{Sev$_3$}, \cite{SevSkv$_2$} and
\cite{SSD}. The main goal of this article is to demonstrate how the
indicated results can be applied to compactness theorems for
solutions of the Beltrami equation and the Dirichlet problem for it.
Our focus is on two types of results. The first of them is the
compactness of the classes of solutions of Beltrami equations
without any boundary conditions. The second type of results relates
to the same equation, but with an additional requirement, when these
solutions have a continuous boundary extension and, moreover,
coincide on the boundary of a given domain with a fixed continuous
function.

\medskip
Concerning the application of convergence and compactness theorems
to differential equations, let us point out earlier publications
related to the Beltrami equations, see, e.g., \cite{Dyb$_1$},
\cite{GR$_1$}--\cite{GR$_2$}, \cite{KPRS}, \cite{LGR} and
\cite{RSY}. We especially mention the little-known article of
Dybov~\cite{Dyb$_1$}, published in an inaccessible journal. Here,
the compactness of the class of homeomorphic solutions of the
Dirichlet problem for the Beltrami equation in the unit disk is
proved. Note that the proof of Dybov's result essentially uses the
geometry of the unit disk, and therefore cannot be transferred to
the case of more general domains by analogy.

\medskip
In what follows, a mapping $f:D\rightarrow{\Bbb C}$ is assumed to be
{\it sense-preserving,} moreover, we assume that $f$ has partial
derivatives almost everywhere. Put $f_{\overline{z}} = \left(f_x +
if_y\right)/2$ and $f_z = \left(f_x - if_y\right)/2.$ The {\it
complex dilatation} of $f$ at $z\in D$ is defined as follows:
$\mu(z)=\mu_f(z)=f_{\overline{z}}/f_z$ for $f_z\ne 0$ and $\mu(z)=0$
otherwise. The {\it maximal dilatation} of $f$ at $z$ is the
following function:
\begin{equation}\label{eq1}
K_{\mu}(z)=K_{\mu_f}(z)=\quad\frac{1+|\mu (z)|}{1-|\mu\,(z)|}\,.
\end{equation}
Given a Lebesgue measurable function $\mu:D\rightarrow {\Bbb D},$
${\Bbb D}=\{z\in {\Bbb C}: |z|<1\},$ we define the {\it maximal
dilatation} of $f$ at $z$ the function $K_{\mu}(z)$ in~(\ref{eq1}).
Note that the Jacobian of $f$ at $z\in D$ is calculated by the
formula
$$J(z,
f)=|f_z|^2-|f_{\overline{z}}|^2\,.$$
It is easy to see that
$K_{\mu_f}(z)=\frac{|f_z|+|f_{\overline{z}}|}{|f_z|-|f_{\overline{z}}|}$
whenever partial derivatives of $f$ exist at $z\in D$ and, in
addition, $J(z, f)\ne 0.$

\medskip
We will call the {\it Beltrami equation} the differential equation
of the form
\begin{equation}\label{eq2}
f_{\overline{z}}=\mu(z)\cdot f_z\,,
\end{equation}
where $\mu=\mu(z)$ is a given function with $|\mu(z)|<1$ a.a. The
{\it regular solution} of~(\ref{eq2}) in the domain $D\subset{\Bbb
C}$ is a homeomorphism $f:D\rightarrow{\Bbb C}$ of the class $W_{\rm
loc}^{1, 1}(D)$ such that $J(z, f)\ne 0$ for almost all $z\in D.$

\medskip
In the extended Euclidean space $\overline{{{\Bbb R}}^n}={{\Bbb
R}}^n\cup\{\infty\},$ we use the so-called {\it chordal metric} $h$
defined by the equalities
\begin{equation}\label{eq3C}
h(x,y)=\frac{|x-y|}{\sqrt{1+{|x|}^2} \sqrt{1+{|y|}^2}}\,,\quad x\ne
\infty\ne y\,, \quad\,h(x,\infty)=\frac{1}{\sqrt{1+{|x|}^2}}\,,
\end{equation}
see e.g.~\cite[Definition~12.1]{Va}. For a given set
$E\subset\overline{{\Bbb R}^n},$ we set
\begin{equation}\label{eq47***}
h(E):=\sup\limits_{x,y\in E}h(x, y)\,.
\end{equation}
The quantity $h(E)$ in~(\ref{eq47***}) is called the {\it chordal
diameter} of the set $E.$ As usual, the family $\frak{F}$ of
mappings $f:D\rightarrow \overline{{\Bbb C}}$ is called {\it
normal,} if from each sequence $f_n\in \frak{F},$ $n=1,2,\ldots , $
one can choose a subsequence $f_{n_k},$ $k=1,2,\ldots ,$ converging
to some mapping $f:D\rightarrow \overline{{\Bbb C}}$ locally
uniformly with respect to the metric $h.$ If, in addition, $f\in
\frak{F},$ the family $\frak{F}$ is called {\it compact}.

\medskip
Let $K$ be a compact set in ${\Bbb C},$ $\mathcal{M}(\Omega)$ be a
function of the open set $\Omega,$ and $\Phi\colon\overline{{\Bbb
R}^{+}}\rightarrow\overline{{\Bbb R}^{+}}$ be a non-decreasing
function. Denote by $\frak{F}^{\mathcal{M}}_{\Phi}(K)$ the class of
all regular solutions $f:{\Bbb C}\rightarrow{\Bbb C}$ of the
equation~(\ref{eq2}) with complex coefficients $\mu$ equal to zero
outside~$K$ such that
\begin{equation}\label{eq1C}
f(z)=z+o(1)\quad \text{as}\quad z\rightarrow\infty\,,
\end{equation}
wherein
\begin{equation}\label{e3.3.1}
\int\limits_{\Omega}\Phi(K_{\mu}(z))\cdot\frac{dm(z)}{(1+|z|^2)^2}\leqslant
\mathcal{M}(\Omega)
\end{equation}
for each open set $\Omega\subset {\Bbb C}.$
The following statement is true.

\medskip
\begin{theorem}\label{th1}
{\sl\, Let $\Phi:\overline{{\Bbb R^{+}}}\rightarrow \overline{{\Bbb
R^{+}}}$ be an increasing convex function that satisfies the
condition
\begin{equation}\label{eq2A}
\int\limits_{\delta}^{\infty}\frac{d\tau}{\tau\Phi^{\,-1}(\tau)}=\infty
\end{equation}
for some~$\delta>\Phi(0).$ Suppose, in addition, that the function
$\mathcal{M}$ is bounded. Then the family
$\frak{F}^{\mathcal{M}}_{\Phi}(K)$ is compact in~${\Bbb C}.$}
\end{theorem}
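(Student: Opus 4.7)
The plan is to combine normality of $\frak{F}^{\mathcal{M}}_{\Phi}(K)$ in the chordal metric --- obtained from the Poletsky-type equicontinuity results cited in the introduction (\cite{Sev$_3$}, \cite{SevSkv$_2$}, \cite{SSD}) --- with a Bojarski-type convergence argument that identifies the subsequential limit as another member of the class.

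First, I would establish normality. The two ingredients are: (a) the standard ring-modulus estimate for regular solutions of (\ref{eq2}), whereby the modulus of the image of each annulus is controlled by an integral of the form $\int K_{\mu}\,\psi^2\,dm$ over an admissible $\psi$; and (b) the Ryazanov--Salimov--Sevost'yanov device which, given the convex $\Phi$ with divergent integral (\ref{eq2A}) and the uniform bound (\ref{e3.3.1}) against the spherical density $dm(z)/(1+|z|^2)^2$, produces an admissible $\psi$ making this integral uniformly small. Combined with the hydrodynamic normalization (\ref{eq1C}), which forces $f_n(\infty)=\infty$ uniformly and $f_n(z)-z\to 0$ on a fixed neighbourhood of $\infty$ (where each $f_n$ is conformal, since $\mu_{f_n}$ vanishes outside the compact $K$), this yields chordal equicontinuity of $\frak{F}^{\mathcal{M}}_{\Phi}(K)$ on all of $\overline{\Bbb C}$.

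Given a locally uniform chordal limit $f_{n_k}\to f$, I would check $f\in\frak{F}^{\mathcal{M}}_{\Phi}(K)$ in three substeps. First, $f$ satisfies (\ref{eq1C}): the conformality of each $f_{n_k}$ outside a large disk containing $K$, together with $f_{n_k}(z)-z\to 0$ at $\infty$, passes to the limit by Weierstrass. Second, $f$ is a homeomorphism with $J(z,f)\ne 0$ a.e.: apply the same normality/equicontinuity argument to the inverse family $\{f_{n_k}^{\,-1}\}$ (equicontinuous by the inverse Poletsky inequality), and use that the locally uniform limit of homeomorphisms whose inverses also converge locally uniformly is again a homeomorphism. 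Third, $f\in W^{1,1}_{\rm loc}({\Bbb C})$ and satisfies (\ref{eq2}) with some $\mu$ supported in $K$: this is the classical Bojarski/Iwaniec convergence theorem, using weak-$\ast$ compactness of $\mu_{f_{n_k}}$ in $L^\infty$ and the identification of the weak-$\ast$ limit with $\mu_f$.

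To check (\ref{e3.3.1}) for the limit $\mu$, I would use the lower semicontinuity of the convex functional $\mu\mapsto\int_\Omega\Phi(K_\mu)\,dm/(1+|z|^2)^2$ under the relevant convergence, which follows by passing to Mazur's convex combinations and applying Fatou's lemma to $\Phi(K_{\mu_{n_k}})$. The main obstacle I expect is the homeomorphism step: ruling out that $f$ collapses a non-trivial continuum or that $J(z,f)=0$ on a set of positive measure. This is precisely where (\ref{eq2A}) enters through the inverse Poletsky inequality, and where the hydrodynamic normalization (\ref{eq1C}) is essential --- without it the limit could degenerate to a constant, and the anchoring at $\infty$ is what keeps the inverse family equicontinuous there.
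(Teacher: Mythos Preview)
Your overall architecture --- normality via the ring $Q$-homeomorphism property of regular Beltrami solutions combined with the integral equicontinuity criterion under (\ref{eq2A})--(\ref{e3.3.1}), then closure of the class under locally uniform limits --- matches the paper's. The substantive divergence, and a genuine gap, is in the step where you conclude that the limit $f$ is a homeomorphism.

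You propose to obtain this from equicontinuity of the inverse family $\{f_{n_k}^{-1}\}$ ``by the inverse Poletsky inequality''. But the ring $Q$-condition for $f_{n_k}^{-1}$ (equivalently, relation~(\ref{eq2*A}) for $f_{n_k}$) carries the weight $Q^*_k(y)=K_{\mu_k}\bigl(f_{n_k}^{-1}(y)\bigr)$ in the \emph{image} variable; after the change of variable $z=f_{n_k}^{-1}(y)$ the relevant quantity is
\[
\int \Phi\bigl(K_{\mu_k}(z)\bigr)\,\frac{J(z,f_{n_k})}{(1+|f_{n_k}(z)|^2)^2}\,dm(z)\,,
\]
and (\ref{e3.3.1}) gives no uniform bound for this without a priori control of the Jacobians $J(z,f_{n_k})$ on $K$, which you do not have in the degenerate setting. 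The paper avoids this obstacle entirely by proving Lemma~\ref{lem1}: a direct \emph{forward}-modulus argument (choosing radii $r_m,R_m$ so that the image moduli in~(\ref{eq7}) vanish uniformly in $k$) which shows that a locally uniform limit of ring $Q_k$-homeomorphisms satisfying the uniform bound~(\ref{eq2!!}) is either a homeomorphism or a constant. The constant alternative is then excluded via conformality near $\infty$: the paper applies Koebe's $\tfrac14$ theorem to the schlicht map $z\mapsto \frac{1}{r_0}\cdot\frac{1}{f(1/(r_0 z))}$ to obtain the uniform inclusion $f(B(0,1/r_0))\subset B(0,4/r_0)$ (this Koebe step is also what furnishes the omitted-set condition needed for equicontinuity via~\cite{RS}, a point you leave implicit), and then uses compactness of the class $S$ together with Weierstrass on Taylor coefficients to identify the limit as a genuine conformal map in a neighbourhood of $\infty$. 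A smaller remark: for the passage to $f\in W^{1,1}_{\rm loc}$ satisfying~(\ref{eq2}) with the persistence of~(\ref{e3.3.1}), the paper invokes Lomako's convergence theorem~\cite{L}, which is tailored to the degenerate-dilatation case; the classical Bojarski result you cite assumes uniformly bounded dilatation and would itself require an extension of exactly this kind.
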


\medskip
Let us now turn to the problem of the compactness of the classes of
solutions of the Dirichlet problem for Beltrami equation. Consider
the following Dirichlet problem:
\begin{equation}\label{eq2C}
f_{\overline{z}}=\mu(z)\cdot f_z\,,
\end{equation}
\begin{equation}\label{eq1A}
\lim\limits_{\zeta\rightarrow z}{\rm
Re\,}f(\zeta)=\varphi(z)\qquad\forall\,\, z\in \partial D\,,
\end{equation}
where $\varphi:\partial D\rightarrow {\Bbb R}$ is a given continuous
function. In what follows, we assume that $D$ is some simply
connected Jordan domain in ${\Bbb C}.$  A mapping $f:D\rightarrow
{\Bbb R}^n$ is called {\it discrete} if the preimage
$\{f^{\,-1}(y)\}$ of each point $y\,\in\,{\Bbb R}^n$ consist of
isolated points, and {\it open} if the image of any open set
$U\subset D $ is an open set in ${\Bbb R}^n.$ The solution of the
problem~(\ref{eq2C})--(\ref{eq1A}) will be called {\it regular} if
one of two is fulfilled: either $f(z)=const$ in $D,$ or $f$ is an
open discrete mapping of class $W_{\rm loc}^{1, 1}(D)$ such that
$J(z, f)\ne 0$ for almost all $z\in D.$

\medskip
Let us fix the point $z_0\in D$ and the function $\varphi.$ Let
$\frak{F}^{\mathcal{M}}_{\varphi, \Phi, z_0}(D)$ denotes the class
of all regular solutions $f:D\rightarrow{\Bbb C}$ of the Dirichlet
problem~(\ref{eq2C})--(\ref{eq1A}) that satisfies the condition
${\rm Im}\,f(z_0)=0$ and, in addition,
\begin{equation}\label{eq1D}
\int\limits_{\Omega}\Phi(K_{\mu}(z))\cdot\frac{dm(z)}{(1+|z|^2)^2}\leqslant
\mathcal{M}(\Omega)
\end{equation}
for any open set~$\Omega\subset D.$ The following statement
generalizes~\cite[Theorem~2]{Dyb$_1$} to the case of simply
connected Jordan domains.

\medskip
\begin{theorem}\label{th2A}
{\sl\, Let $D$ be some simply connected Jordan domain in ${\Bbb C},$
$\Phi:\overline{{\Bbb R^{+}}}\rightarrow \overline{{\Bbb R^{+}}}$ is
an increasing convex function such that the condition~(\ref{eq2A})
holds for some $\delta>\Phi (0).$ Assume that the function
$\mathcal{M}$ is bounded, and the function $\varphi$ in~(\ref{eq1A})
is continuous. Then the family $\frak{F}^{\mathcal{M}}_{\varphi,
\Phi, z_0}(D)$ is compact in $D.$}
\end{theorem}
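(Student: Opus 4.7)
The plan is to apply a Stoilow-type factorization, reduce to the compactness of homeomorphic Beltrami solutions on one side and to a normal family of holomorphic functions on the other, and then glue the factors back. Fix a sequence $\{f_n\}\subset \frak{F}^{\mathcal{M}}_{\varphi,\Phi,z_0}(D)$; the case in which infinitely many $f_n$ are constant is trivial (in this situation $\varphi$ itself must be constant and the common value is forced), so assume every $f_n$ is open and discrete. Since $D$ is a simply connected Jordan domain, by the Stoilow factorization one may write $f_n=h_n\circ g_n$, where $g_n\colon D\to\mathbb{D}$ is a homeomorphism and $h_n\colon\mathbb{D}\to\mathbb{C}$ is holomorphic; moreover $g_n$ is itself a regular solution of (\ref{eq2C}) with the same complex coefficient $\mu_n=\mu_{f_n}$, and so $g_n$ inherits the integral bound (\ref{eq1D}). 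Normalize by requiring $g_n(z_0)=0$ and $g_n(\zeta_0)=1$ for a preselected $\zeta_0\in\partial D$, killing the rotational ambiguity.

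Applying the equicontinuity and boundary-extension results of \cite{Sev$_3$}, \cite{SevSkv$_2$}, \cite{SSD} to both $\{g_n\}$ and $\{g_n^{-1}\}$ (their hypotheses being secured by (\ref{eq2A}) and the boundedness of $\mathcal{M}$), each $g_n$ extends to a homeomorphism $\overline{D}\to\overline{\mathbb{D}}$, and a subsequence can be extracted so that $g_n\to g$ uniformly on $\overline{D}$ and $g_n^{-1}\to g^{-1}$ uniformly on $\overline{\mathbb{D}}$. The limit $g$ is a homeomorphism and a regular Beltrami solution whose dilatation satisfies (\ref{eq1D}) by lower semicontinuity. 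For the analytic factor, note that $u_n:=\mathrm{Re}\,h_n$ is harmonic on $\mathbb{D}$ with boundary values $\varphi\circ g_n^{-1}|_{\partial\mathbb{D}}$, so by the maximum principle $\|u_n\|_{L^\infty(\mathbb{D})}\le\|\varphi\|_{C(\partial D)}$; together with $h_n(0)=f_n(z_0)\in\mathbb{R}$ and $|h_n(0)|\le\|\varphi\|_\infty$, a standard Harnack bound on the harmonic conjugate $\mathrm{Im}\,h_n$ shows $\{h_n\}$ is uniformly bounded on compact subsets of $\mathbb{D}$, hence normal. Passing to a further subsequence, $h_n\to h$ locally uniformly on $\mathbb{D}$ with $h$ holomorphic.

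Set $f:=h\circ g$. Then $f_n=h_n\circ g_n\to f$ locally uniformly on $D$. Either $f$ is constant or, as a composition of an analytic map with a homeomorphic Beltrami solution, $f$ is open and discrete in $W^{1,1}_{\mathrm{loc}}(D)$, satisfies (\ref{eq2C}) with $\mu_f=\mu_g$, has $J(\cdot,f)\ne 0$ a.e., and inherits (\ref{eq1D}). The identity $\mathrm{Im}\,f(z_0)=\mathrm{Im}\,h(0)=\lim \mathrm{Im}\,h_n(0)=0$ is immediate. The principal obstacle is verifying the boundary condition (\ref{eq1A}) for $f$. Via the Poisson representation
\begin{equation*}
u_n(w)=\int_{\partial\mathbb{D}}P(w,\xi)\,\varphi(g_n^{-1}(\xi))\,d\sigma(\xi),
\end{equation*}
the uniform convergence $g_n^{-1}\to g^{-1}$ on $\overline{\mathbb{D}}$, joint with the continuity of $\varphi$ on $\partial D$, yields $u_n\to u=P[\varphi\circ g^{-1}]$ uniformly on $\overline{\mathbb{D}}$; hence $\mathrm{Re}\,h(w)\to\varphi(z)$ as $w\to g(z)\in\partial\mathbb{D}$, which via the Carath\'eodory extension of $g$ translates to $\mathrm{Re}\,f(\zeta)\to\varphi(z)$ as $\zeta\to z\in\partial D$. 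The delicate step in this chain is the uniform, simultaneous Carath\'eodory-type boundary extension of the $g_n$ with a common modulus of continuity; this is precisely what the divergence assumption (\ref{eq2A}) together with the boundedness of $\mathcal{M}$ is designed to supply through the cited equicontinuity machinery.
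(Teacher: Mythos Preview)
Your outline coincides with the paper's own proof: Stoilow factorization $f_n=h_n\circ g_n$ with $g_n\colon D\to\mathbb D$ and $g_n(z_0)=0$; equicontinuity of $\{\overline{g}_n\}$ on $\overline{D}$ and of $\{\overline{g}_n^{\,-1}\}$ on $\overline{\mathbb D}$; passage to a limit homeomorphism $g$; and the Schwarz/Poisson representation of the analytic factor to recover the boundary condition~(\ref{eq1A}) for the limit. Two small points deserve attention. First, your boundary normalization $g_n(\zeta_0)=1$ for $\zeta_0\in\partial D$ is premature---at that stage the continuous boundary extension of $g_n$ has not yet been established---and is in any case unnecessary: the single interior condition $g_n(z_0)=0$ already suffices for all the equicontinuity statements used (this is exactly how the paper proceeds). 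Second, and more substantively, the equicontinuity and boundary-extension theorems in \cite{Sev$_3$}, \cite{SevSkv$_2$}, \cite{SSD} that you invoke are formulated there for a \emph{fixed} majorant $Q$, whereas here $Q_n=K_{\mu_{g_n}}$ varies with $n$; the paper's Lemmas~\ref{lem1}--\ref{lem3} and Theorems~\ref{th2}, \ref{th4} are devoted precisely to re-proving those statements uniformly under the integral constraint~(\ref{eq2!!A}), and this is where the real technical work of the paper hides.
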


\medskip
Further, the manuscript is structured as follows. In the following
sections, we give some auxiliary results are presented that concern
convergence of mappings, closure of classes, local and boundary
behavior. The proofs of the main results, Theorems~\ref{th1}
and~\ref{th2A}, are located in the last two sections.

\section{Ring homeomorphisms with constraints of integral type}

In what follows, $M$ denotes the $n$-modulus of a family of paths,
and the element $dm(x)$ corresponds to a Lebesgue measure in ${\Bbb
R}^n,$ $n\geqslant 2,$ see~\cite{Va}. Set
$$S(x_0, r)=\{x\in {\Bbb
R}^n: |x-x_0|=r\}\,,B(x_0, r)=\{x\in {\Bbb R}^n: |x-x_0|<r\}\,,$$
$${\Bbb B}^n:=B(0, 1)\,,\quad {\Bbb S}^{n-1}:=S(0, 1)\,,\quad \Omega_n=m({\Bbb B}^n)\,,\quad
\omega_{n-1}=\mathcal{H}^{n-1}({\Bbb S}^{n-1})\,,$$
where $\mathcal{H}^{n-1}$ means the $(n-1)$-dimensional Hausdorff
measure in ${\Bbb R}^n.$ In what follows, we set
$$h(A, B)=\inf\limits_{x\in A, y\in B}h(x, y)\,,$$
where $h$ is a chordal distance defined in~(\ref{eq3C}).
In addition, given domains $A, B\subset {\Bbb R}^n$ we put
$${\rm dist}\,(A, B)=
\inf\limits_{x\in A, y\in B}|x-y|\,.$$
Given sets $E$ and $F$ and a domain $D$ in $\overline{{\Bbb
R}^n}={\Bbb R}^n\cup \{\infty\},$ we denote by $\Gamma(E, F, D)$ the
family of all paths $\gamma:[0, 1]\rightarrow \overline{{\Bbb R}^n}$
joining $E$ and $F$ in $D,$ that is, $\gamma(0)\in E,$ $\gamma(1)\in
F$ and $\gamma(t)\in D$ for all $t\in (0, 1).$ Everywhere below,
unless otherwise stated, the boundary and the closure of a set are
understood in the sense of an extended Euclidean space
$\overline{{\Bbb R}^n}.$ Let $x_0\in\overline{D},$ $x_0\ne\infty,$
\begin{equation}\label{eq1F}
A=A(x_0, r_1, r_2)=\{ x\,\in\,{\Bbb R}^n : r_1<|x-x_0|<r_2\}\,.
\end{equation}
Let $Q:{\Bbb R}^n\rightarrow {\Bbb R}^n$ be a Lebesgue measurable
function satisfying the condition $Q(x)\equiv 0$ for $x\in{\Bbb
R}^n\setminus D.$ The mapping $f:D\rightarrow \overline{{\Bbb R}^n}$
is called a {\it ring $Q$-mapping at the point $x_0\in
\overline{D}\setminus \{\infty\}$}, if the condition
\begin{equation}\label{eq3*!gl0} M(f(\Gamma(S(x_0, r_1),\,S(x_0, r_2),\,A\cap D)))
\leqslant \int\limits_{A\cap D} Q(x)\cdot \eta^n(|x-x_0|)\, dm(x)
\end{equation}
holds for any $0<r_1<r_2<d_0:=\sup\limits_{x\in D}|x-x_0|$ and all
Lebesgue measurable functions $\eta:(r_1, r_2)\rightarrow [0,
\infty]$ such that
\begin{equation}\label{eq*3gl0}
\int\limits_{r_1}^{r_2}\eta(r)\,dr\geqslant 1\,.
\end{equation}

\medskip
The next important lemma was established earlier
in~\cite[Theorem~4.4]{RSS} for the case when the sequence of
functions $Q_m,$ $m=1,2,\ldots, $ considered in it is fixed.

\medskip
\begin{lemma}\label{lem1}
{\sl\, Let $D$ be a domain in ${\Bbb R}^n,$ $n\geqslant 2,$ and let
$f_k,$ $k=1,2,\ldots $ be a sequence of homeomorphisms of the domain
$D$ in ${\Bbb R}^n,$ which converges locally uniformly in $D$ to
some mapping $f: D\rightarrow \overline{{\Bbb R}^n}$ by chordal
metric $h.$ Suppose, moreover, that $\Phi:[0, \infty]\rightarrow [0,
\infty] $ is a strictly increasing convex function, and each map
$f_k,$ $k=1,2,\ldots $ satisfies the relation~(\ref{eq3*!gl0}) at
each point $x_0\in D$ with some function $Q=Q_k(x)$ such that
\begin{equation}\label{eq2!!}
\int\limits_D\Phi\left(Q_k(x)\right)\frac{dm(x)}{\left(1+|x|^2\right)^n}\
\leqslant\ M_0<\infty\,,\qquad k=1,2,\ldots\,.
\end{equation}
If
\begin{equation}\label{eq3!} \int\limits_{\delta_0}^{\infty}
\frac{d\tau}{\tau\left[\Phi^{-1}(\tau)\right]^{\frac{1}{n-1}}}=
\infty\,,
\end{equation}
for some $\delta_0>\tau_0:=\Phi(0),$ then $f$ is either a
homeomorphism $f:D\rightarrow {\Bbb R}^n,$ or a constant
$c\in\overline{{\Bbb R}^n}.$ }
\end{lemma}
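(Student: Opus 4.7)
The plan is to reduce the lemma to the fixed-coefficient version \cite[Theorem~4.4]{RSS} by establishing uniform modulus bounds for the sequence $\{f_k\}$ that survive passage to the limit, rather than attempting to pass to a weak limit of the $Q_k$ themselves. The aim is then to run the usual injectivity-or-constancy dichotomy for $f$.

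The first step is to derive, for each $x_0\in D$ and every $0<r_1<r_2<d_0$, a bound
\[
M(f_k(\Gamma(S(x_0,r_1),S(x_0,r_2),A\cap D)))\le \alpha(r_1,r_2),
\]
in which $\alpha(r_1,r_2)$ is independent of $k$ and tends to $0$ as $r_1/r_2\to 0$. To obtain this I would insert a well-chosen admissible $\eta$ into~(\ref{eq3*!gl0}), apply Jensen's inequality with respect to the convex $\Phi$, and then exploit~(\ref{eq2!!}) together with the divergence condition~(\ref{eq3!}); this is the standard computation underlying \cite[Theorem~4.4]{RSS}, which I would rerun in the sequential setting to verify that the final estimate depends on the family $\{Q_k\}$ only through the uniform bound $M_0$.

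With this uniform bound in hand I would pass to the limit. By the lower semicontinuity of modulus under locally uniform convergence of homeomorphisms in $\overline{{\Bbb R}^n}$ (cf.~\cite{Va}),
\[
M(f(\Gamma(S(x_0,r_1),S(x_0,r_2),A\cap D)))\le\liminf_{k\to\infty}M(f_k(\Gamma(\,\cdot\,)))\le \alpha(r_1,r_2).
\]
Assume $f$ is not constant and suppose for contradiction that $f(x_0)=f(y_0)$ for some distinct $x_0,y_0\in D$. Choose radii so that $r_1<|x_0-y_0|<r_2$ with $\overline{B(x_0,r_2)}\subset D$. Because each $f_k$ is a homeomorphism, $f_k(S(x_0,r_j))$, $j=1,2$, are topological spheres separating $f_k(x_0)$ from $f_k(y_0)$, while $f_k(x_0)$ and $f_k(y_0)$ both tend to the common value $f(x_0)=f(y_0)$. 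A Loewner-type lower bound for the modulus of curves joining two nondegenerate continua whose chordal distance tends to zero then yields a positive lower bound for $M(f_k(\Gamma(\,\cdot\,)))$, uniform in large $k$, while $\alpha(r_1,r_2)$ can be made arbitrarily small by shrinking $r_1$; this is the desired contradiction. Hence $f$ is injective, and together with continuity this upgrades to a homeomorphism onto its image by invariance of domain; a parallel modulus argument at a hypothetical pole excludes the value $\infty$, so $f:D\to{\Bbb R}^n$.

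The principal obstacle is the very first step: the coefficients $Q_k$ vary with $k$, so \cite[Theorem~4.4]{RSS} cannot be invoked as a black box. One has to retrace the Jensen-type estimate used there and confirm that the dependence on the coefficients enters solely through $M_0$ in~(\ref{eq2!!}). Once this is in hand, the semicontinuity step and the dichotomy argument are relatively standard.
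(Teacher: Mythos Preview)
Your overall plan coincides with the paper's: first show that for every $x_0\in D$ one can find radii $r_m<R_m\to 0$ with
\[
M\bigl(f_k(\Gamma(S(x_0,r_m),S(x_0,R_m),A(x_0,r_m,R_m)))\bigr)\to 0\quad\text{uniformly in }k,
\]
and then conclude the dichotomy. The paper verifies this decay (its display~(\ref{eq7})) by exactly the Orlicz--Jensen computation you propose, tracking that the estimate depends on $\{Q_k\}$ only through the constant $M_0$; it then simply invokes \cite[Lemma~4.1]{RSS}, which takes this uniform decay as a black-box hypothesis and outputs ``homeomorphism or constant''. So the reduction target should be that lemma rather than \cite[Theorem~4.4]{RSS}, which is the fixed-$Q$ statement and cannot be applied directly.

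Your hand-written dichotomy, by contrast, has gaps. The displayed inequality $M(f(\Gamma))\le\liminf_k M(f_k(\Gamma))$ is false in general (take $f_k(x)=x/k$, $f\equiv 0$: then $f(\Gamma)$ consists of constant paths and has modulus $+\infty$), and in any case you never use it. With your choice $r_1<|x_0-y_0|<r_2$ the outer sphere $S(x_0,r_2)$ does \emph{not} separate $x_0$ from $y_0$, so the separation claim fails for $j=2$. Most seriously, the Loewner lower bound you invoke requires \emph{both} complementary components of the image ring to have chordal diameter bounded below uniformly in $k$ and in $r_1$; the inner component $f_k(\overline{B(x_0,r_1)})$ carries only the single marked point $f_k(x_0)$, and nothing in your setup prevents its diameter from collapsing as you ``shrink $r_1$'', which would kill the lower bound precisely when you need it. The standard repair is to first relocate $x_0$ (using non-constancy of $f$) so that every ball $B(x_0,r_1)$ contains a point mapped away from $c$, but this step is missing. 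The cleanest fix is the paper's: once the uniform decay~(\ref{eq7}) is established, cite \cite[Lemma~4.1]{RSS} and stop.
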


\medskip
\begin{proof}
We use Lemma~4.1 in~\cite{RSS} (see also the estimates of the
integrals used in the proof of Theorem~1 in~\cite{Sev$_1$}). As
above, we put $A=A(x_0, r, R)=\{x\in{\Bbb R}^n: r<|x-x_0|<R\}.$ To
use~\cite[Lemma~4.1]{RSS} it is necessary to establish the existence
of sequences $0<r_m<R_m<\infty, $ $m=1,2,\ldots,$ such that
\begin{equation}\label{eq7}
M(f_k(\Gamma(S(x_0, r_m), S(x_0, R_m), A(x_0, r_m,
R_m))))\rightarrow 0
\end{equation}
as $m\rightarrow\infty$ uniformly over $k=1,2, \ldots .$ Choose an
arbitrary infinitely small sequence $R_m>0,$ $m=1,2, \ldots,$ and
fix a number $m\in{\Bbb N}.$
By~\cite[Lemma~7.3]{MRSY}
\begin{equation}\label{eq8}
M(f_k(\Gamma(S(x_0, r_m), S(x_0, R_m), A(x_0, r_m,
R_m))))\leqslant\frac{\omega_{n-1}}{I^{n-1}}\,,
\end{equation}
where
\begin{equation}\label{eq10}I=\int\limits_{r_m}^{R_m}\frac{dt}{tq^{\frac{1}{n-1}}_{k_{x_0}}(t)},\quad
q_{k_{x_0}}(t)=\frac{1}{\omega_{n-1}r^{n-1}} \int\limits_{S(x_0,
t)}Q_k(x)\,d\mathcal{H}^{n-1}\,.
\end{equation}
Using the substitution of variables $t=r/R_m, $ for any
$\varepsilon\in (0, R_m) $ we obtain that
\begin{equation}\label{eq34}
\int\limits_{\varepsilon}^{R_m}\frac{dr}{rq^{\frac{1}{n-1}}_{k_{x_0}}(r)}
=\int\limits_{\varepsilon/R_m}^1\frac{dt}{tq^{\frac{1}{n-1}}_{k_{x_0}}(tR_m)}
=\int\limits_{\varepsilon/R_m}^1\frac{dt}{t\widetilde{q}^{\frac{1}{n-1}}_{0}(t)}\,,
\end{equation}
where $\widetilde{q}_0(t)$ is the average integral value of the
function $\widetilde{Q}(x):=Q_k(R_mx+x_0)$ over the sphere $|x|=t,$
see the ratio~(\ref{eq10}). Then, according to~\cite[Lemma~3.1]{RS},
\begin{equation}\label{eq35}
\int\limits_{\varepsilon/R_m}^1\frac{dt}{t\widetilde{q}^{\frac{1}{n-1}}_{0}(t)}\geqslant
\frac{1}{n}\int\limits_{eM_*\left(\varepsilon/R_m\right)}^{\frac{M_*\left(\varepsilon/R_m\right)
R_m^n}{\varepsilon^n}}\frac{d\tau}
{\tau\left[\Phi^{-1}(\tau)\right]^{\frac{1}{n-1}}}\,,
\end{equation}
where
$$M_*\left(\varepsilon/R_m\right)=
\frac{1}{\Omega_n\left(1-\left(\varepsilon/R_m\right)^n\right)}
\int\limits_{A\left(0, \varepsilon/R_m, 1\right)} \Phi\left(Q_k(R_mx
+x_0)\right)\,dm(x)=$$
$$=
\frac{1}{\Omega_n\left(R_m^n-\varepsilon^n\right)}
\int\limits_{A\left(x_0, \varepsilon, R_m\right)}
\Phi\left(Q_k(x)\right)\,dm(x)\,.$$
Observe that $|x|\leqslant |x-x_0|+ |x_0|\leqslant R_m+|x_0|$ for
any~$x\in A(x_0, \varepsilon, R_m).$ Thus
$$M_*\left(\varepsilon/R_m\right)\leqslant \frac{\beta_m(x_0)}
{\Omega_n\left(R_m^n-\varepsilon^n\right)}\int\limits_{A(x_0,
\varepsilon, R_m)}
\Phi(Q_k(x))\frac{dm(x)}{\left(1+|x|^2\right)^n}\,,$$
where $\beta_m(x_0)=\left(1+(R_m+|x_0|)^2\right)^n.$ Therefore,
$$M_*\left(\varepsilon/R_m\right)\leqslant \frac{2\beta_m(x_0)}{\Omega_n R^n_m}M_0$$
for $\varepsilon\leqslant R_m /\sqrt[n]{2},$ where $M_0$ is a
constant in~(\ref{eq2!!}).
Observe that
$$M_*\left(\varepsilon/R_m\right)>\Phi(0)>0\,,$$
because $\Phi$ is increasing. Now, by~(\ref{eq34}) and~(\ref{eq35})
we obtain that
\begin{equation}\label{eq12}\int\limits_{\varepsilon}^{R_m}\frac{dr}{rq^{\frac{1}{n-1}}_{k_{x_0}}(r)} \geqslant
\frac{1}{n}\int\limits_{\frac{2\beta_m(x_0)M_0e}{\Omega_nR_m^n}}
^{\frac{\Phi(0)R_m^n}{\varepsilon^n}}\frac{d\tau}
{\tau\left[\Phi^{\,-1}(\tau)\right]^{\frac{1}{n-1}}}\,.
\end{equation}
From the conditions~(\ref{eq3!}) and~(\ref{eq12}) it follows that
there exists a number $0<r_m<R_m $ such that
\begin{equation}\label{eq13}
\int\limits_{r_m}^{R_m}\frac{dr}{rq^{\frac{1}{n-1}}_{k_{x_0}}(r)}\geqslant
2^m\,.
\end{equation}
Finally, it follows from~(\ref{eq8}) and~(\ref{eq13}) that there are
infinitesimal positive sequences $r_m$ and $R_m$ satisfying the
condition~(\ref{eq7}). Then by~\cite[Lema~4.1]{RSS} the map $f$ is
either a homeomorphism in ${\Bbb R}^n,$ or a constant in
$\overline{{\Bbb R}^n},$ which was required to prove.
\end{proof}~$\Box$

\medskip
According to~\cite{GM}, a domain $D\subset{\Bbb R}^n$ is called the
{\it quasiextremal distance domain}, abbr. {\it $QED$-domain}, if
there exists a number $A\geqslant 1 $ such that the inequality
\begin{equation}\label{eq4***}
M(\Gamma(E, F, {\Bbb R}^n))\leqslant A\cdot M(\Gamma(E, F, D))
\end{equation}
holds for any continua $E$ and $F$ in $D.$
The next assertion was established by the second co-author
in~\cite[Lemmas~3.1, 3.2]{Sev$_3$} for the case of a fixed function
$Q.$ However, it is fundamentally important for us to prove a
similar assertion when the functions $Q$ can change, but are subject
to condition~(\ref{eq2!!}).

\medskip
\begin{lemma}\label{lem2}
{\sl\, Let $D$ and $D^{\,\prime}$ be domains in ${\Bbb R}^n,$
$n\geqslant 2,$ $b_0\in D,$ $b_0^{\,\prime}\in D^{\,\prime},$ and
let $f_k:D\rightarrow D^{\,\prime},$ $k=1,2,\ldots, $ be a family of
homeomorphisms of~$D$ onto $D^{\,\prime}$ with
$f_k(b_0)=b^{\,\prime}_0,$ $k=1,2, \ldots. $ Suppose that any $f_k,$
$k=1,2,\ldots ,$ satisfies the relation~(\ref{eq3*!gl0}) at any
point $x_0\in \overline{D}$ and some function $Q=Q_k(x)\geqslant 1 $
such that
\begin{equation}\label{eq2!!A}
\int\limits_D\Phi\left(Q_k(x)\right)\frac{dm(x)}{\left(1+|x|^2\right)^n}\
\leqslant M_0<\infty\,,\qquad k=1,2,\ldots\,.
\end{equation}
Let $D$ be locally connected on the boundary, and let $D^{\,\prime}$
be a $QED$-domain containing at least one finite boundary point. If
\begin{equation}\label{eq3!A} \int\limits_{\delta_0}^{\infty}
\frac{d\tau}{\tau\left[\Phi^{-1}(\tau)\right]^{\frac{1}{n-1}}}=
\infty
\end{equation}
for some $\delta_0>\tau_0:=\Phi(0),$ then each $f_k,$
$k=1,2,\ldots,$ has a continuous extension
$f_k:\overline{D}\rightarrow\overline{D^{\,\prime}}$ and, besides
that, the family of all extended mappings $\overline{f}_k,$
$k=1,2,\ldots,$ is equicontinuous in $\overline{D}.$
 }
\end{lemma}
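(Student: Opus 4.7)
The plan is to adapt the proof of~\cite[Lemmas~3.1,~3.2]{Sev$_3$}, which treats the case of a single fixed dilatation~$Q$, to the present setting where the $Q_k$ vary but are subject to the uniform integral bound~(\ref{eq2!!A}). The crucial observation is that the modulus estimates carried out in the proof of Lemma~\ref{lem1} above depend on the sequence $\{Q_k\}$ only through the constant $M_0$, so all estimates can be made uniform in~$k$; beyond this uniformity, the argument is structurally the same as in the fixed-$Q$ case.

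The first step is to establish a uniform boundary decay property: for each $x_0\in\partial D$ and each infinitesimal sequence $R_m>0$, I find radii $0<r_m<R_m$ such that
\begin{equation*}
M(f_k(\Gamma(S(x_0,r_m),\,S(x_0,R_m),\,A(x_0,r_m,R_m)\cap D)))\ \longrightarrow\ 0
\end{equation*}
as $m\to\infty$, uniformly in $k=1,2,\ldots.$ This is obtained by repeating verbatim the chain of inequalities~(\ref{eq8})--(\ref{eq13}) from the proof of Lemma~\ref{lem1}, with~(\ref{eq2!!A}) in place of~(\ref{eq2!!}); the right-hand side of~(\ref{eq12}) depends only on $M_0$, $R_m$, and~$|x_0|$, which is exactly what uniformity in~$k$ requires.

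The second step is to convert this uniform modulus decay into a uniform bound on the chordal diameter of $f_k(U_m)$, where $U_m$ is the component of $B(x_0,r_m)\cap D$ whose boundary contains~$x_0$. Local connectedness of $D$ at $\partial D$ ensures, for $m$ large enough, that $\partial U_m\cap D$ lies on $S(x_0,r_m)$, so the path family appearing above separates $f_k(U_m)$ from $f_k(D\setminus B(x_0,R_m))$ inside~$D^{\,\prime}$. The QED inequality~(\ref{eq4***}) then transfers the uniform upper bound to the modulus of the corresponding family in $\overline{{\Bbb R}^n}$; invoking a Loewner-type lower bound with the $k$-independent data $b^{\,\prime}_0=f_k(b_0)$ and a fixed finite point $\zeta_0\in\partial D^{\,\prime}$ then forces $h(f_k(U_m))\to 0$ uniformly in~$k$. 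Uniform diameter control yields both the continuous extension $\overline{f}_k:\overline{D}\to\overline{D^{\,\prime}}$ and the equicontinuity of $\{\overline{f}_k\}$ at every boundary point; interior equicontinuity is entirely analogous and easier. The main obstacle is precisely this second step: one must extract, uniformly in~$k$, continua inside $\overline{D^{\,\prime}}$ of positive chordal size that witness the Loewner lower bound, which is why the hypotheses $f_k(b_0)=b^{\,\prime}_0$ and the existence of a finite point of~$\partial D^{\,\prime}$ are built into the statement.
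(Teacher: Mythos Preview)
Your proposal is correct and follows essentially the same approach as the paper: both adapt \cite[Lemmas~3.1,~3.2]{Sev$_3$} by replacing the single-$Q$ estimates with the uniform-in-$k$ bounds obtained in the proof of Lemma~\ref{lem1}, using the QED property of $D^{\,\prime}$ together with the normalization $f_k(b_0)=b_0^{\,\prime}$ to pass from modulus decay to chordal-diameter decay. The paper is more explicit in that it invokes \cite[Lemma~3.1]{Sev$_3$} directly to obtain the exponential bound~(\ref{eq3.10}) and then shows the exponent diverges uniformly in~$k$ via~(\ref{eq1I})--(\ref{eq12E}), whereas you describe the same mechanism as a ``Loewner-type lower bound''; the substance is identical.
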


\medskip
\begin{proof}
The equicontinuity of ${\{f_k\}}^{\infty}_{k=1}$ at inner points of
$D$ follows by~\cite[Theorem~4.1]{RS}. It is only necessary to prove
the possibility of continuous boundary extension of each $f_k,$
$k=1,2, \ldots,$ , as well as the equicontinuity of the family of
extended mappings $\overline{f}_k$ at $\partial D.$ We may assume
that all functions $Q_k(x),$ $k=1,2, \ldots, $ are extended by the
rule $Q_k(x)\equiv 1$ for $x\in {\Bbb R}^n\setminus D.$ Let $x_0\in
\partial D,$ and let $0<\varepsilon_0<\delta(x_0):=\sup\limits_{x\in
D}|x-x_0|,$ $ 0<\varepsilon <\varepsilon_0.$ Consider the function
$I_k(\varepsilon, \varepsilon_0)=\int\limits
_{\varepsilon}^{\varepsilon_0}\psi_k(t)\,dt,$ where
\begin{equation}\label{eq1*****}\psi_k(t)\quad=\quad \left \{\begin{array}{rr}
1/[tq_{k_{x_0}}^{\frac{1}{n-1}}(t)]\ , & \ t\in (\varepsilon,
\varepsilon_0)\ ,
\\ 0\ ,  &  \ t\notin (\varepsilon,
\varepsilon_0)\,,
\end{array} \right.
\end{equation}
and $q_{k_{x_0}}$ is defined by~(\ref{eq10}). Note that
$I_k(\varepsilon, \varepsilon_0)<\infty$ for any $\varepsilon\in (0,
\varepsilon_0).$
Arguing similarly to the proof of the relation~(\ref{eq12}), we may
show that
\begin{equation}\label{eq12B}\int\limits_{a}^{b}
\frac{dr}{rq^{\frac{1}{n-1}}_{k_{x_0}}(r)} \geqslant
\frac{1}{n}\int\limits_{\frac{2\beta(x_0)M_0e}{\Omega_nb^n}}
^{\frac{\Phi(0)b^n}{a^n}}\frac{d\tau}
{\tau\left[\Phi^{\,-1}(\tau)\right]^{\frac{1}{n-1}}}\,,
\end{equation}
for every $0<b<\varepsilon_0 $ and sufficiently small $0<a<b,$ where
$\beta(x_0)=(1+(b+|x_0|)^2)^n.$ By~(\ref{eq3!A}) and (\ref{eq12B}),
$I_k(\varepsilon, \varepsilon_0)>0$ for $\varepsilon\in (0,
\varepsilon^{\,\prime}_0)$ and some
$0<\varepsilon^{\,\prime}_0<\varepsilon_0.$ Using direct
calculations and Fubini's theorem (see also~\cite[Lemma~7.4]{MRSY})
we obtain that
$$\int\limits_{\varepsilon<|x-x_0|<\varepsilon_0}
Q_k(x)\cdot\psi^n(|x-x_0|)\,dm(x)=\omega_{n-1}\cdot I_k(\varepsilon,
\varepsilon_0)\,.$$
Now, by~\cite[Lemma~3.1]{Sev$_3$} there exists
$\widetilde{\varepsilon_0}=\widetilde{\varepsilon_0}(x_0)\in (0,
\varepsilon_0)$ such that
\begin{equation}
\label{eq3.10} h(f_k(E_1))\leqslant \frac{\alpha_n}{\delta}\,
\exp\left\{-\beta I_k(\sigma, \varepsilon_0)\cdot
(\alpha(\sigma))^{-1/(n-1)}\right\}\,,\quad k=1,2,\ldots\,,
\end{equation}
for any $\sigma\in (0, \widetilde{\varepsilon_0})$ and for any
continuum $E_1\subset B(x_0, \sigma)\cap D,$ where $h(f_k(E_1)$ is
defined in~(\ref{eq47***}). Here we use the notation
\begin{equation}\label{eq1.3}
\alpha(\sigma)=\left(
1+\frac{\int\limits_{\widetilde{\varepsilon_0}}^{\varepsilon_0}\psi_k(t)\,dt}
{\int\limits_{\sigma}^{\widetilde{\varepsilon_0}}\psi_k(t)\,dt}\right)^n\,,\end{equation}
besides that, $\delta = \frac{1}{2}\cdot h\left(b_0^{\,\prime},
\partial D^{\,\prime}\right),$
$\alpha_n$ is some constant depending only on $n,$ $A$ is a constant
from the definition of $QED$-domain $D^{\,\prime}$ in~(\ref{eq4***})
and $\beta = \left(\frac{1}{A}\right)^{\frac{1}{n-1}}.$ Since
$q_{k_{x_0}}(t)\geqslant 1$ for almost any $t\in (0,
\varepsilon_0),$ we obtain that
\begin{equation}\label{eq1I}
\int\limits_{\widetilde{\varepsilon_0}}^{\varepsilon_0}\psi_k(t)\,dt\leqslant
\log\frac{\varepsilon_0}{\widetilde{\varepsilon_0}}:=C_1>0\,,\qquad
k=1,2,\ldots \,.
\end{equation}
Arguing similarly to the proof of relation~(\ref{eq12}), , and
taking into account relation~(\ref{eq3!A}), we may show that
\begin{equation}\label{eq12E}
\int\limits_{\sigma}^{\widetilde{\varepsilon_0}}\psi_k(t)\,dt
\geqslant
\frac{1}{n}\int\limits_{\frac{2C_2(x_0)M_0e}{\Omega_n{\widetilde{\varepsilon_0}}^n}}
^{\frac{\Phi(0){\widetilde{\varepsilon_0}}^n}{\sigma^n}}\frac{d\tau}
{\tau\left[\Phi^{\,-1}(\tau)\right]^{\frac{1}{n-1}}}\geqslant
1\,,\qquad k=1,2,\ldots\,,
\end{equation}
for some $\sigma_1\in(0, \widetilde {\varepsilon_0})$ and all
$0<\sigma <\sigma_1,$ where
$C_2(x_0)=(1+(\widetilde{\varepsilon_0}+|x_0|)^2)^n.$
By~(\ref{eq3.10}), (\ref{eq1I}) and~(\ref{eq12E}) we obtain that
\begin{equation}\label{eq3.11}
h(f_k(E_1))\leqslant \frac{\alpha_n}{\delta}\, \exp\left\{-\beta
\cdot C_3\cdot
\frac{1}{n}\int\limits_{\frac{2C_4(x_0)M_0e}{\Omega_n\varepsilon^n_0}}
^{\frac{\Phi(0)\varepsilon_0^n}{\sigma^n}}\frac{d\tau}
{\tau\left[\Phi^{\,-1}(\tau)\right]^{\frac{1}{n-1}}}\right\}\,,
\quad k=1,2,\ldots,
\end{equation}
for any $\sigma\in (0, \sigma_2)$ and some $0<\sigma_2<\sigma_1,$
where $C_3~:=~(1+C_1)^{-n/(n-1)}$ and
$C_4(x_0)~:=~(1+(\varepsilon_0+|x_0|)^2)^n>0$ is some constant.
Due to the condition~(\ref{eq3!A}), the relation~(\ref{eq3.11})
implies the existence of a non-negative function
$\Delta=\Delta(\sigma)$ such that
\begin{equation}\label{eq3.11A}
h(f_k(E_1))\leqslant \Delta(\sigma)\rightarrow 0\,,\quad
\sigma\rightarrow 0\,,\quad k=1,2,\ldots \,.
\end{equation}
Note that $QED$-domains have the so-called strongly accessible
boundaries (see, e.g., \cite[Remark~13.10]{MRSY}). Further
considerations are made similarly to the proof of Lemma~3.2
in~\cite{Sev$_3$}. Namely, the possibility of extension of $f_k$ to
a continuous mapping $\overline{f}_k:\overline{D}\rightarrow
\overline{D^{\,\prime}}$ follows from~\cite[Theorem~2]{Sev$_4$}. Let
us to prove the equicontinuity of
${\{\overline{f}_k\}}^{\infty}_{k=1}$ at $\partial D.$ Suppose the
contrary, namely, that the family of mappings
${\{\overline{f}_k\}}^{\infty}_{k=1}$ is not equicontinuous at some
point $x_0\in \partial D. $ We can consider that $x_0\ne \infty. $
Then there is a number $a> 0$ such that for each $m=1,2,\ldots$
there is $x_m\in\overline{D}$ and $\overline{f}_{k_m}$ such that
$|x_0-x_m|< 1/m$ and, simultaneously,
\begin{equation}\label{eq42***}
h(\overline{f}_{k_m}(x_m), \overline{f}_{k_m}(x_0))\geqslant a\,.
\end{equation}
Since $f_k$ have a continuous extension on $\partial D,$ we may
consider that $x_m\in D,$ $m=1,2,\ldots .$ Besides that, by
continuity of $\overline{f}_{k_m}$ at $x_0$ there exists a sequence
$x^{\,\prime}_m\in D$ such that
$$h(\overline{f}_{k_m}(x^{\,\prime}_m),
\overline{f}_{k_m}(x_0))\leqslant a/2\,.$$ Now, by~(\ref{eq42***})
and by the triangle inequality we obtain that
\begin{equation}\label{eq42A}
h(f_{k_m}(x_m), f_{k_m}(x^{\,\prime}_m))\geqslant a/2\,.
\end{equation}
Since $D$ is locally connected at $x_0,$ there is a sequence of open
neighborhoods $V_m$ of $x_0$ such that the sets $W_m:=D\cap V_m$ are
connected and $W_m \subset B(x_0, 2^{\,-m}).$ Turning to the
subsequence, if necessary, we may assume that $x_m,
x_m^{\,\prime}\in W_m.$ Put $0<\varepsilon_0<\sup\limits_{x\in
D}|x-x_0|.$ We may consider that $B(x_0, 2^{\,-m})\subset B(x_0,
\varepsilon_0)$ for any $m=1,2,\ldots .$ Since $W_m$ is open and
connected, it is also path connected (see, e.g.,
\cite[Proposition~13.1]{MRSY}). Thus, the points $x_m$ and
$x^{\,\prime}_m$ may be joined by a path $\gamma_m$ in $W_m.$ Set
$E_m:=|\gamma_m|,$ where, as usual, $|\gamma|=\{x\in {\Bbb
R}^n:\,\,\exists\,t\in[a, b]: \gamma(t)=x\}$ denotes the locus of
$\gamma.$  Now, by~(\ref{eq3.11A})
$$h(f_{k_m}(E_m))\leqslant \Delta(2^{\,-m})\rightarrow 0\,,\quad m\rightarrow\infty\,.$$
The last relation contradicts~(\ref{eq42A})), which completes the
proof.~$\Box$
\end{proof}

\section{Proof of Theorem~\ref{th1}}

{\bf I.} First of all, we prove that the family
$\frak{F}^{\mathcal{M}}_{\Phi}(K)$ is equicontinuous. Fix
$f\in\frak{F}^{\mathcal{M}}_{\Phi}(K),$ arbitrary compact set
$C\subset {\Bbb C}$ and put $\widetilde{f}=\frac{1}{f(1/z)}.$  Since
$f(z)=z+o(1)$ as $z\rightarrow\infty,$
$\lim\limits_{z\rightarrow\infty}f(z)=\infty.$ Set
$\widetilde{f}(0)=0.$ If $f(1/w_0)=0,$ we set
$\widetilde{f}(w_0):=\infty$. Since $f(z)=z+o(1)$ as
$z\rightarrow\infty,$ there is a neighborhood $U$ of the origin and
a function $\varepsilon: U\rightarrow {\Bbb C}$ such that
$f(1/z)=1/z+\varepsilon(1/z),$ where $z\in U$ and
$\varepsilon(1/z)\rightarrow 0$ as $z\rightarrow 0.$ Thus,
$$\frac{\widetilde{f}(\Delta z)-\widetilde{f}(0)}{\Delta z}=
\frac{1}{\Delta z}\cdot \frac{1}{1/(\Delta z)+\varepsilon(1/\Delta
z)}=\frac{1}{1+(\Delta z)\cdot \varepsilon(1/\Delta z)}\rightarrow 1
$$
as $\Delta z\rightarrow 0.$ This proves that there exists
$\widetilde{f}^{\,\prime}(0),$ and $\widetilde{f}^{\,\prime}(0)=1.$
Since $\mu(z)$ vanishes outside $K,$ the mapping $f$ is conformal in
some neighborhood $V:={\Bbb C}\setminus B(0, 1/r_0)$ of the
infinity, and the number $1/r_0$ depends only on $K,$ and $K\subset
B(0, 1/r_0).$ Without loss of generality, we may assume that the
compact set $C$ also satisfies the condition $C\subset B (0, 1 /
r_0).$ In this case, the mapping $\widetilde{f}=\frac{1}{f(1/z)}$ is
conformal in $B(0, r_0).$ In addition, the mapping
$F(z):=\frac{1}{r_0}\cdot \widetilde{f}(r_0z)$ is a homeomorphism of
the unit disk such that $F(0)=0$ and $F^{\,\prime}(1)=1.$ By Koebe's
theorem on~1/4 (see, e.g., \cite[Theorem~1.3]{CG},
cf.~\cite[Theorem~1.1.3]{GR$_2$}) $F({\Bbb D})\supset B (0, 1/4).$
Then
\begin{equation}\label{eq4}
\widetilde{f}(B(0, r_0))\supset B(0, r_0/4)\,.
\end{equation}
By~(\ref{eq4})
\begin{equation}\label{eq5}
(1/f)(\overline{\Bbb C}\setminus \overline{B(0, 1/r_0)})\supset B(0,
r_0/4)\,.
\end{equation}
Taking into account formula~(\ref{eq5}), we show that
\begin{equation}\label{eq6}
f(\overline{\Bbb C}\setminus \overline{B(0, 1/r_0)})\supset
\overline{{\Bbb C}}\setminus \overline{B(0, 4/r_0)}\,.
\end{equation}
Indeed, let $y\in \overline{{\Bbb C}}\setminus \overline{B(0,
4/r_0)}.$ Now, $\frac{1}{y}\in B(0, r_0/4).$ By~(\ref{eq5}),
$\frac{1}{y}=(1/f)(x),$ $x\in \overline{\Bbb C}\setminus
\overline{B(0, 1/r_0)}.$ Thus, $y=f(x),$ $x\in \overline{\Bbb
C}\setminus \overline{B(0, 1/r_0)},$ which proves~(\ref{eq6}).

\medskip
Since $f$ is a homeomorphism in~${\Bbb C},$ by~(\ref{eq6}) we obtain
that
$$f(B(0, 1/r_0))\subset B(0, 4/r_0)\,.$$
Set $\Delta:=h(\overline{\Bbb C}\setminus B(0, 4/r_0)),$ where
$h(\overline{\Bbb C}\setminus B(0, 4/r_0))$ is defined
in~(\ref{eq47***}) for $E=\overline{\Bbb C}\setminus B(0, 4/r_0).$
By~\cite[Theorem~3.1]{LSS} $f$ is a ring $Q$-homeomorphism in ${\Bbb
C}$ for $Q=K_{\mu}(z),$ where $\mu$ is defined in~(\ref{eq2}), and
$K_{\mu}$ is defined in~(\ref{eq1}). In this case,
$\frak{F}^{\mathcal{M}}_{\Phi}(K)$ is equicontinuous in~$B(0,
1/r_0)$ by~\cite[Theorem~4.1]{RS}. Let $f_n\in
\frak{F}^{\mathcal{M}}_{\Phi}(K),$ $n=1,2,\ldots .$ By the
Arzela-Ascoli theorem (see, e.g., \cite[Theorem~20.4]{Va}) there is
a subsequence $f_{n_k}(z)$ of $f_n,$ $k=1,2,\ldots ,$ and a
continuous mapping $f:B(0, 1/r_0)\rightarrow\overline{\Bbb C}$ such
that $f_{n_k}$ converges to $f$ in $B(0, 1/r_0)$ locally uniformly
as $k\rightarrow\infty.$ In particular, since $C$ belongs to $B(0,
1/r_0),$ a sequence $f_{n_k}$ converges to $f$ uniformly in $C.$
Since $C$ was chosen arbitrarily, we proved that the family of
mappings $f_{n_k}$ converges to the mapping $f$ locally uniformly.

\medskip
{\bf II.} To complete the proof of Theorem~\ref{th1}, it remains to
establish that $f\in \frak{F}^{\mathcal{M}}_{\Phi}(K).$ First of
all, we prove that the limit mapping $f$ satisfies the condition
$f(z)=z+o(1)$ as $z\rightarrow\infty.$ Note that the family mappings
$F_{n_k}(z):=\frac{1}{r_0}\cdot\frac{1}{f_{n_k}(\frac{1}{r_0z})}$ is
compact in the unit disk (see, e.g., \cite[Theorem~1.10]{CG},
cf.~\cite[Theorem~1.2 Ch.~I]{GR$_2$}). Without loss of generality,
we may consider that $F_{n_k}$ converges locally uniformly in ${\Bbb
D}.$ Now $F(z)=\frac{1}{r_0}\cdot\frac{1}{f(\frac{1}{r_0z})}$
belongs to the class $S,$ consisting of conformal mappings $F$ of
the unit disk that satisfy the conditions $F(0)=0,$
$F^{\,\prime}(0)=1.$ Then the expansions of functions $F$ and
$F_{n_k}$ in a Taylor series at the origin have the form
\begin{equation}\label{eq1E}F_{n_k}(z)=z+c_kz^2+z^2\cdot \varepsilon_k(z)\,,\quad
k=1,2,\ldots\,,
\end{equation}
\begin{equation}\label{eq2K}
F(z)=z+c_0z^2+z^2\cdot \varepsilon_0(z)\,,
\end{equation}
where $\varepsilon_k(z)$ and $\varepsilon_0(z)$ tend to zero as
$z\rightarrow 0.$ It follows from~(\ref{eq1E}) and (\ref{eq2K}) that
\begin{equation}\label{eq3A}
f_{n_k}(t)=\frac{r_0t^2}{r_0t+c_k+\varepsilon_k\left(\frac{1}{r_0t}\right)}\,,\quad
f_{n_k}(t)-t=-\frac{c_k+\varepsilon_k\left(\frac{1}{r_0t}\right)}
{r_0+\frac{c_k}{t}+\frac{\varepsilon_k\left(\frac{1}{r_0t}\right)}{t}}\,,
\quad k=1,2,\ldots\,,
\end{equation}
\begin{equation}\label{eq4B}
f(t)=\frac{r_0t^2}{r_0t+c_0+\varepsilon_0\left(\frac{1}{r_0t}\right)}\,,\quad
f(t)-t=-\frac{c_0+\varepsilon_0\left(\frac{1}{r_0t}\right)}
{r_0+\frac{c_0}{t}+\frac{\varepsilon_0\left(\frac{1}{r_0t}\right)}{t}}\,.
\end{equation}
In particular, passing to the limit in~(\ref{eq3A}) as
$t\rightarrow\infty,$ we obtain that $f_{n_k}(t)-t\rightarrow
-\frac{c_k}{r_0}.$ Since $f_{n_k}(t)-t\rightarrow 0$ as
$t\rightarrow \infty,$ we obtain that $c_k=0.$ By the Weierstrass
theorem on the convergence of the coefficients of the Taylor series
(see, e.g., \cite[Theorem~1.1.I]{Gol}) we obtain that
$c_k=0\rightarrow c_0$ as $k\rightarrow\infty.$ Thus, $c_0=0$
in~(\ref{eq4B}), in other words, the mapping $f$ also has a
hydrodynamic normalization: $f(z)=z+o(1)$ as $z\rightarrow\infty.$

\medskip
Now we show that $f$ is a homeomorphism of the complex plane. We put
$\mu_k:=\mu_{f_{n_k}}.$ By~\cite[Theorem~3.1]{LSS} $f_{n_k}$ is a
ring $Q$-homeomorphism at each point $z_0\in {\Bbb C},$ where
$Q=K_{\mu_k}(z).$
By the definition of the class of mappings
$\frak{F}^{\mathcal{M}}_{\Phi}(K),$ there exists $M_0>0$ such that
\begin{equation}\label{eq11}
\mathcal{M}({\Bbb C})\leqslant M_0\,,
\end{equation}
where $\mathcal{M}$ is a function from~(\ref{e3.3.1}). By
Lemma~\ref{lem1} the following alternative holds: either $f$ is a
homeomorphism with values in ${\Bbb C},$ or $f$ is a constant with
values in $\overline{\Bbb C}.$ As shown above in step~{\bf I}, the
mapping $f$ is homeomorphism in some neighborhood of infinity.
Therefore, $f$ is a homeomorphism of the whole complex plane, which
takes only finite complex values.

\medskip
By~\cite[Lemma~1 and Theorem~1]{L} $f\in W_{\rm loc}^{1, 1}({\Bbb
C}),$ beside that, $f$ is a regular solutions of the
equation~(\ref{eq2}) for some function $\mu:{\Bbb C}\rightarrow
{\Bbb D},$ and the relation~(\ref{e3.3.1}) is fulfilled for the
corresponding function $K_{\mu}.$ By the Gehring-Lehto theorem, the
map $f$ is almost everywhere differentiable
(see~\cite[Theorem~III.3.1]{LV}). Therefore, $\mu(z)=0$ for almost
all $z\in {\Bbb C}\setminus K$ (see Theorem~16.1 in~\cite{RSS}).
Thus, $f\in \frak{F}^{\mathcal{M}}_{\Phi}(K).$~$\Box$

\section{Equicontinuity of families of mappings with the inverse Poletsky inequality}

Compared to Theorem~\ref{th1}, Theorem~\ref{th2A} is a much more
complicated result, and for its proof we need a number of auxiliary
statements. The most important of them relate to mappings whose
inverse satisfy the inequality~(\ref{eq3*!gl0}). However, for
greater generality, we will establish the corresponding assertions
even in the case when the mappings under consideration have no
inverse at all, although, at the same time, the corresponding
inequality, containing the distortion of the modulus of families of
paths under the mappings, holds. Note that results similar to those
presented below were obtained by us in some individual situations
(see~\cite{SevSkv$_2$} and~\cite{SSD}).

\medskip
Let $y_0\in {\Bbb R}^n,$ $0<r_1<r_2<\infty,$ and let $A=A(y_0,
r_1,r_2)=\left\{ y\,\in\,{\Bbb R}^n: r_1<|y-y_0|<r_2\right\}$ is
defined in~(\ref{eq1F}).
Let, as above, $M(\Gamma)$ denotes a conformal modulus of families
of paths $\Gamma$ in ${\Bbb R}^n$ (see, e.g., \cite[Ch.~6]{Va}). Let
$f:D\rightarrow{\Bbb R}^n,$ $n\geqslant 2,$ be some mapping, and let
$Q:{\Bbb R}^n\rightarrow [0, \infty]$ be a Lebesgue measurable
function satisfying the condition $Q(x)\equiv 0$ for $x\in{\Bbb
R}^n\setminus f(D).$ Let $A=A(y_0, r_1, r_2),$ and let
$\Gamma_f(y_0, r_1, r_2)$ denotes the family of all paths
$\gamma:[a, b]\rightarrow D$ such that $f(\gamma)\in \Gamma(S(y_0,
r_1), S(y_0, r_2), A(y_0, r_1, r_2)),$ i.e., $f(\gamma(a))\in S(y_0,
r_1),$ $f(\gamma(b))\in S(y_0, r_2),$ and $f(\gamma(t))\in A(y_0,
r_1, r_2)$ for any $a<t<b.$ We say that {\it $f$ satisfies the
inverse Poletsky inequality} at $y_0\in f (D)$ if the relation
\begin{equation}\label{eq2*A}
M(\Gamma_f(y_0, r_1, r_2))\leqslant \int\limits_A Q(y)\cdot \eta^n
(|y-y_0|)\, dm(y)
\end{equation}
holds for any Lebesgue measurable function $\eta:(r_1,
r_2)\rightarrow [0, \infty]$ such that
\begin{equation}\label{eqA2}
\int\limits_{r_1}^{r_2}\eta(r)\,dr\geqslant 1 \,.
\end{equation}
Note that relations~(\ref{eq2*A})--(\ref{eqA2}) are equivalent to
relations~(\ref{eq3*!gl0})--(\ref{eq*3gl0}) for the inverse mapping
$g=f^{\,-1},$ provided that it exists. Indeed, observe that
\begin{equation}\label{eq2D}
g(\Gamma(S(y_0, r_1), S(y_0, r_2), f(D)))=\Gamma_f(y_0, r_1, r_2)\,.
\end{equation}
In fact, if $\gamma\in g(\Gamma(S(y_0, r_1), S(y_0, r_2), f(D))),$
then $\gamma:[a, b]\rightarrow {\Bbb R}^n,$ where
$\gamma=g\circ\alpha,$ $\alpha:[a, b]\rightarrow {\Bbb R}^n$ and
$\alpha(a)\in S(y_0, r_1),$ $\alpha(b)\in S(y_0, r_2),$
$\alpha(t)\in f(D)$ for $a\leqslant t\leqslant b.$ Now,
$\gamma(t)\in D$ for $a\leqslant t\leqslant b$ and
$f(\gamma)=\alpha\in \Gamma(S(y_0, r_1), S(y_0, r_2), f(D)),$ i.e.,
$\gamma\in \Gamma_f(y_0, r_1, r_2).$ Thus, $g(\Gamma(S(y_0, r_1),
S(y_0, r_2), f(D)))\subset\Gamma_f(y_0, r_1, r_2).$ The inverse
inclusion is proved similarly.

\medskip
A mapping $f$ between domains $D$ and $D^{\,\prime}$ is called {\it
closed} if $f(E)$ is closed in $D^{\,\prime}$ for any closed set
$E\subset D$ (see, e.g., \cite[Section~3]{Vu}). The boundary of the
domain $D$ is called {\it weakly flat at the point $x_0,$} if for
every number $P>0$ and for any neighborhood $U$ of this point there
is a neighborhood $V$ of $x_0$ such that $M(\Gamma(E, F, D))>P$ for
any continua $E$ and $F,$ satisfying conditions $F\cap
\partial U\ne\varnothing\ne F\cap
\partial V.$ The boundary of a domain
$D$ is called {\it weakly flat} if it is such at each point of its
boundary.

\medskip
Given $\delta>0,$ $M_0>0,$ domains $D, D^{\,\prime}\subset {\Bbb
R}^n,$ $n\geqslant 2,$ and a continuum $A\subset D^{\,\prime}$
denote by ${\frak S}_{\delta, A, M_0}(D, D^{\,\prime})$ the family
of all open discrete and closed mappings $f$ of $D$ onto
$D^{\,\prime},$ for which the condition~(\ref{eq2*A}) is fulfilled
for each $y_0\in D^{\,\prime}$ with some function $Q=Q_f\in
L^1(D^{\,\prime})$ such that
$\int\limits_{D^{\,\prime}}Q(y)\,dm(y)\leqslant M_0$ and, in
addition, $h(f^{\,-1}(A),
\partial D)\geqslant~\delta.$ An analogue of the
following statement is proved earlier for the case of a fixed
function $Q$ (see~\cite[Theorem~1.2]{SSD}). Note that the proof
given in~\cite{SSD} practically does not differ from the one given
below, however, for the sake of completeness of presentation, we
give it in full in this text. 

\medskip
\begin{theorem}\label{th2}
{\sl\, Let $D$ and $D^{\,\prime}$ be domains in ${\Bbb R}^n,$
$n\geqslant 2 .$ Assume that $D$ has a weakly flat boundary, and
$D^{\,\prime}$ is locally connected at the boundary. Then each
$f\in{\frak S}_{\delta, A, M_0}(D, D^{\,\prime})$ has a continuous
boundary extension $\overline{f}:\overline{D}\rightarrow
\overline{D^{\,\prime}}$ such that
$\overline{f}(\overline{D})=\overline{D^{\,\prime}},$ in addition,
the family ${\frak S}_{\delta, A, M_0}(\overline{D},
\overline{D^{\,\prime}})$ of all extended mappings
$\overline{f}:\overline{D}\rightarrow \overline{D^{\,\prime}}$ is
equicontinuous in $\overline{D}.$ }
\end{theorem}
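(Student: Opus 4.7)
The plan is to adapt the proof of Theorem~1.2 in~\cite{SSD}, which establishes the same conclusion under the stronger hypothesis that all $f$ in the family share a single weight $Q$. The crucial observation is that in that argument $Q$ enters only through the integral bound $\int_{D^{\,\prime}} Q\,dm\leqslant M_0$ and never through its pointwise values, so the reasoning transports verbatim to our setting in which the weights $Q_f$ vary across the family but share the uniform $L^1$-bound. I would split the proof into three stages: (i) interior equicontinuity; (ii) continuous surjective boundary extension of each individual $f\in{\frak S}_{\delta,A,M_0}$; (iii) equicontinuity of the extended family at $\partial D$.

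Stage~(i) follows from~(\ref{eq2*A}) and the uniform $L^1$-bound on $Q_f$ by the standard modulus-of-continuity estimate for mappings with the inverse Poletsky inequality. For stage~(ii), fix $f\in{\frak S}_{\delta,A,M_0}$ and $x_0\in\partial D$; closedness of $f$ forces the cluster set $C(f,x_0)$ to lie in $\partial D^{\,\prime}$, and were it to contain two distinct points $y_1\ne y_2$, local connectedness of $D^{\,\prime}$ at its boundary would allow one to pick a spherical shell $A(y_0,r_1,r_2)\subset D^{\,\prime}$ whose bounding spheres separate small neighborhoods of $y_1$ and $y_2$. Weak flatness of $\partial D$ at $x_0$ would then force the modulus of a suitable path family in $D$ connecting preimages of these neighborhoods to diverge, while applying~(\ref{eq2*A}) with the admissible $\eta\equiv 1/(r_2-r_1)$ on $(r_1,r_2)$ yields the uniform bound $M(\Gamma_f(y_0,r_1,r_2))\leqslant M_0/(r_2-r_1)^n$, a contradiction. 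Hence $C(f,x_0)$ is a singleton and $f$ extends to a continuous $\overline{f}:\overline{D}\to\overline{D^{\,\prime}}$; surjectivity $\overline{f}(\overline{D})=\overline{D^{\,\prime}}$ then follows from closedness of $f$ together with compactness of $\overline{D}$.

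For stage~(iii) I would argue by contradiction: suppose there exist $x_0\in\partial D$, $a>0$, $x_m\to x_0$ in $\overline{D}$, and $\overline{f}_{k_m}$ in the family with $h(\overline{f}_{k_m}(x_m),\overline{f}_{k_m}(x_0))\geqslant a$. After extraction one may assume $x_m\in D$ and $\overline{f}_{k_m}(x_0)\to y^*\in\partial D^{\,\prime}$; continuity of each $\overline{f}_{k_m}$ at $x_0$ supplies auxiliary points $x_m^{\,\prime}\in D$ near $x_0$ with $f_{k_m}(x_m^{\,\prime})\to y^*$ and $h(f_{k_m}(x_m),f_{k_m}(x_m^{\,\prime}))\geqslant a/2$. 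Using the weakly flat structure of $\partial D$ at $x_0$, one joins $x_m$ and $x_m^{\,\prime}$ by a path $\gamma_m$ in a connected set $V_m\cap D$ with $V_m\subset B(x_0,2^{\,-m})$; set $E_m=|\gamma_m|$ and $F_m=f_{k_m}^{\,-1}(A)$. Since $y^*\in\partial D^{\,\prime}$ is bounded away from the fixed compact continuum $A\subset D^{\,\prime}$, one chooses $y_0\in D^{\,\prime}$ near $y^*$ and $0<r_1<r_2$ so that for large $m$ the sphere $S(y_0,r_1)$ separates $f_{k_m}(x_m^{\,\prime})$ from $A$ while $B(y_0,r_2)\cap A=\varnothing$. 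Every path $\gamma\in\Gamma(E_m,F_m,D)$ starting near $x_m^{\,\prime}$ then contains a subpath lying in $\Gamma_{f_{k_m}}(y_0,r_1,r_2)$, whence~(\ref{eq2*A}) with $\eta\equiv 1/(r_2-r_1)$ supplies the uniform-in-$m$ bound $M(\Gamma(E_m,F_m,D))\leqslant M_0/(r_2-r_1)^n$. On the other hand, since $h(F_m,\partial D)\geqslant\delta$ keeps $F_m$ bounded away from $x_0$ and $E_m$ reaches $\partial V_m$, weak flatness of $\partial D$ at $x_0$ forces $M(\Gamma(E_m,F_m,D))\to\infty$, the required contradiction.

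The principal obstacle is the careful choice of $y_0,r_1,r_2$ and of $E_m$ in stage~(iii) so that essentially every path in $\Gamma(E_m,F_m,D)$ admits a subpath in $\Gamma_{f_{k_m}}(y_0,r_1,r_2)$, rather than only those starting from the $x_m^{\,\prime}$-side. As in~\cite{SSD}, this is handled by further restricting $E_m$ to the sub-continuum on which $f_{k_m}$ takes values in a small neighborhood of $y^*$; the existence of such a sub-continuum relies on the continuous boundary extension of each $f_{k_m}$ proved in stage~(ii), so the individual-$f$ boundary result is crucially reused to deliver the family-level equicontinuity.
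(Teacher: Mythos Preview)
Your stages (i) and (ii) are fine and coincide with the paper's reductions to \cite{SSD}. The genuine gap is in stage~(iii), in the application of weak flatness. Weak flatness at $x_0$ gives $M(\Gamma(E,F,D))>P$ only for continua $E,F$ that \emph{both} meet $\partial U$ and $\partial V$ for appropriate neighborhoods $U\supset V$ of $x_0$; in particular both continua must come arbitrarily close to $x_0$ \emph{and} extend beyond a fixed neighborhood $U$. Your choice $E_m=|\gamma_m|$ shrinks to $\{x_0\}$ (so it never reaches $\partial U$ for any fixed $U$), while $F_m=f_{k_m}^{-1}(A)$ stays at chordal distance $\geqslant\delta$ from $\partial D$ (so it never meets $\partial V$ once $V$ is small) and need not even be a continuum. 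Hence there is no reason for $M(\Gamma(E_m,F_m,D))\to\infty$; in a smooth domain the modulus of paths from a shrinking arc to a fixed compact is in fact bounded. Your last-paragraph fix addresses the upper bound, not this lower bound, so the contradiction does not close.

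The paper resolves this by a different geometric construction. Instead of working with $E_m$ and $f^{-1}(A)$, it builds in $D^{\,\prime}$ two \emph{disjoint} paths $C^1_m,C^2_m$: one from a point of $A$ to $f_m(x_m)$, the other from a (different) point of $A$ to $f_m(x_m^{\,\prime})$, arranging via \cite[Lemma~2.1]{SevSkv$_1$} and local connectedness that their loci stay a fixed positive distance apart. These are then \emph{lifted} through the open, discrete, closed map $f_m$ (using \cite[Lemma~3.7]{Vu}) to continua $D^1_m,D^2_m\subset D$ starting at $x_m,\,x_m^{\,\prime}$ respectively and ending in $f_m^{-1}(A)$. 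Now each $D^i_m$ contains a point near $x_0$ and a point at distance $\geqslant\delta$ from $\partial D$, so $h(D^i_m)\geqslant\delta/2$ and both continua cross $\partial U$ and $\partial V$; weak flatness then gives $M(\Gamma(|D^1_m|,|D^2_m|,D))>c$. For the upper bound, the images $f_m(|D^i_m|)=|C^i_m|$ are separated, so one covers $|C^1_m|$ by a \emph{fixed finite} collection of balls $B(z_i,l_0/4)$ (plus one ball near the limit of $f_m(x_m)$) and shows $\Gamma(|D^1_m|,|D^2_m|,D)$ is minorized by $\bigcup_i\Gamma_{f_m}(z_i,l_0/4,l_0/2)$; applying~(\ref{eq2*A}) at each center with $\eta\equiv 4/l_0$ and summing yields a bound depending only on $N_0,\,l_0$ and $M_0$. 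The path-lifting step is the missing idea in your argument.
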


\medskip
\begin{remark}\label{rem1}
In Theorem~\ref{th2}, the equicontinuity must be understood with
respect to the chordal metric, that is, for every $\varepsilon>0$
there is $\delta=\delta(\varepsilon, x_0)>0$ such that
$h(\overline{f}(x), \overline{f}(x_0))<\varepsilon$ for all $x\in
D,$ $h(x, x_0)<\delta,$ and all $\overline{f}\in{\frak S}_{\delta,
A, M_0}(\overline{D}, \overline{D^{\,\prime}}).$
\end{remark}

\medskip {\it Proof of Theorem~\ref{th2}.} Let $f\in {\frak
S}_{\delta, A, M_0}(D, D^{\,\prime}).$ By~\cite[Theorem~3.1]{SSD} a
mapping $f$ has a continuous extension
$\overline{f}:\overline{D}\rightarrow \overline{D^{\,\prime}},$
besides that, $\overline{f}(\overline{D})=\overline{D^{\,\prime}}.$
The equicontinuity of ${\frak S}_{\delta, A, M_0}(\overline{D},
\overline{D^{\,\prime}})$ in $D$ follows by~\cite[Theorem~1.1]{SSD}.
It remains to establish the equicontinuity of the family ${\frak
S}_{\delta, A, M_0}(\overline{D}, \overline{D^{\,\prime}})$ on
$\partial D.$

We carry out the proof by contradiction. Suppose that the above
conclusion does not hold. Then there is a point $x_0\in
\partial D,$ a positive number
$\varepsilon_0>0,$ a sequence $x_m\in \overline{D},$ converging to a
point $x_0$ and a map $\overline{f}_m\in {\frak S}_{\delta, A,
M_0}(\overline{D}, \overline{D})$ such that
\begin{equation}\label{eq12A}
h(\overline{f}_m(x_m),\overline{f}_m(x_0))\geqslant\varepsilon_0,\quad
m=1,2,\ldots .
\end{equation}
Put $f_m:=\overline{f}_m|_{D}.$ Since the map $f_m$ has a continuous
extension to $\partial D^{\,\prime},$ we may assume that $x_m\in D.$
Therefore, $\overline{f}_m(x_m)=f_m(x_m).$ In addition, there is a
sequence $x^{\,\prime}_m\in D$ such that $x^{\,\prime}_m\rightarrow
x_0$ as $m\rightarrow\infty$ and $h(f_m(x^{\,\prime}_m),
\overline{f}_m(x_0))\rightarrow 0$ as $m\rightarrow \infty.$ Since
$\overline{{\Bbb R}^n}$ is a compact metric space, we may consider
that $f_m(x_m)$ and $\overline{f}_m(x_0)$ converge as
$m\rightarrow\infty.$ Let $f_m(x_m)\rightarrow \overline{x_1}$ and
$\overline{f}_m(x_0)\rightarrow \overline{x_2}$ as
$m\rightarrow\infty.$ By the continuity of the metric
in~(\ref{eq12A}), $\overline{x_1}\ne \overline{x_2}.$ Without loss
of generality, we may consider that $\overline{x_1}\ne\infty.$ Since
$f_m$ are closed, they are boundary preserving
(see~\cite[Theorem~3.3]{Vu}). Thus, $\overline{x_2}\in\partial D.$
Let $\widetilde{x_1}$ and $\widetilde{x_2}$ be diferent points of
$A,$ none of which is the same as $\overline{x_1}.$
By~\cite[Lemma~2.1]{SevSkv$_1$}, cf.~\cite[Lemma~2.1]{SevSkv$_2$},
we can join pairs of points $\widetilde{x_1},$ $\overline{x_1}$ and
$\widetilde{x_2},$ $\overline{x_2}$ using paths $\gamma_1:[0,
1]\rightarrow \overline{D^{\,\prime}}$ and $\gamma_2:[0,
1]\rightarrow \overline{D^{\,\prime}}$ so that $|\gamma_1|\cap
|\gamma_2|=\varnothing,$ $\gamma_1(t), \gamma_2(t)\in D^{\,\prime}$
for $t\in (0, 1),$ $\gamma_1(0)=\widetilde{x_1},$
$\gamma_1(1)=\overline{x_1},$ $\gamma_2(0)=\widetilde{x_2}$ and
$\gamma_2(1)=\overline{x_2}.$ Since $D^{\,\prime}$ is locally
connected on $\partial D^{\,\prime},$  there are neighborhoods $U_1$
and $U_2$ of points $\overline{x_1}$ and $\overline{x_2},$
respectively, whose closures do not intersect, and, moreover, sets
$W_i:=D^{\,\prime}\cap U_i$ are path-connected. Without loss of
generality, we may assume that $\overline{U_1}\subset
B(\overline{x_1}, \delta_0)$ and
\begin{equation}\label{eq12C}
\overline{B(\overline{x_1},
\delta_0)}\cap|\gamma_2|=\varnothing=\overline{U_2}\cap|\gamma_1|\,,
\quad \overline{B(\overline{x_1}, \delta_0)}\cap
\overline{U_2}=\varnothing\,.
\end{equation}
We may also consider that $f_m(x_m)\in W_1$ and
$f_m(x^{\,\prime}_m)\in W_2$ for any $m\in {\Bbb N}.$ Let $a_1$ and
$a_2$ are arbitrary points in $|\gamma_1|\cap W_1$ and
$|\gamma_2|\cap W_2,$ correspondingly. Let $0<t_1, t_2<1$ be such
that $\gamma_1(t_1)=a_1$ and $\gamma_2(t_2)=a_2.$ Join the points
$a_1$ and $f_m(x_m)$ by means of the path $\alpha_m:[t_1,
1]\rightarrow W_1$ such that $\alpha_m(t_1)=a_1$ and
$\alpha_m(1)=f_m(x_m).$ Similarly, let us join $a_2$ and
$f_m(x^{\,\prime}_m)$ by means of the path $\beta_m:[t_2,
1]\rightarrow W_2,$ $\beta_m(t_2)=a_2$ and
$\beta_m(1)=f_m(x^{\,\prime}_m)$ (see Figure~\ref{fig4}).
\begin{figure}[h]
\centerline{\includegraphics[scale=0.55]{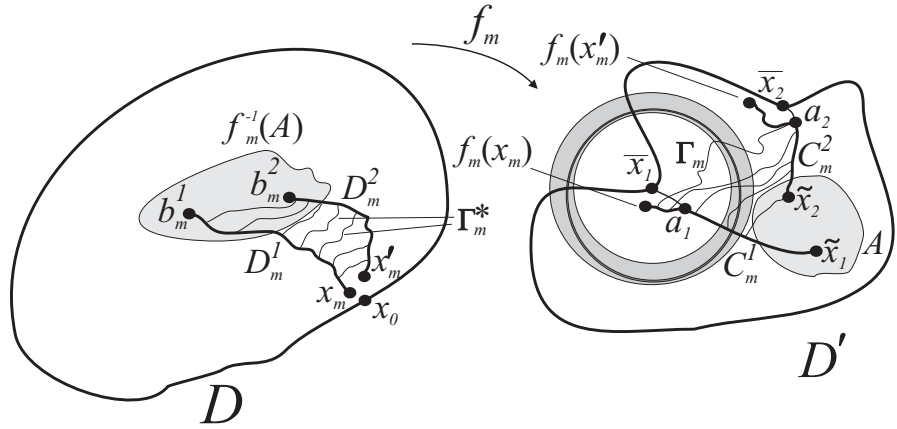}} \caption{To
the proof of Theorem~\ref{th2}}\label{fig4}
\end{figure}
Put
$$C^1_m(t)=\quad\left\{
\begin{array}{rr}
\gamma_1(t), & t\in [0, t_1],\\
\alpha_m(t), & t\in [t_1, 1]\end{array} \right.\,,\qquad
C^2_m(t)=\quad\left\{
\begin{array}{rr}
\gamma_2(t), & t\in [0, t_2],\\
\beta_m(t), & t\in [t_2, 1]\end{array} \right.\,.$$
Recall that a path $\alpha:[a, b]\rightarrow D$ is called a {\it
(whole) $f$-lifting of $\beta:[a, b]\rightarrow {\Bbb R}^n$ starting
at $x_0\in D,$} if $f(\alpha(t))=\beta(t)$ for any $t\in[a, b]$ and,
in addition, $\alpha(a)=x_0.$ Let $D^1_m$ and $D^2_m$ be whole
$f_m$-liftings of paths $C^1_m$ and $C^2_m$ starting at points $x_m$
and $x^{\,\prime}_m,$ respectively (such liftings exist
by~\cite[Lemma~3.7]{Vu}). In particular, by the condition
$h(f_m^{\,-1}(A),
\partial D)\geqslant~\delta>0$ from the definition of a class~${\frak
S}_{\delta, A, M_0}(D, D^{\,\prime}),$ the end points of paths
$D^1_m$ and $D^2_m,$ which we denote by $b_m^1$ and $b_m^ 2,$
distant from the boundary $D$ by at least on $\delta.$ As usual, we
denote by $|C^1_m|$ and $|C^2_m|$ the loci of the paths $C^1_m$ and
$C^2_m,$ respectively. Set
$$l_0=\min\{{\rm dist}\,(|\gamma_1|,
|\gamma_2|), {\rm dist}\,(|\gamma_1|, U_2\setminus\{\infty\})\}\,.$$
Consider the coverage $A_0:=\bigcup\limits_{x\in |\gamma_1|}B(x,
l_0/4)$ by balls of $|\gamma_1|.$ Since $|\gamma_1|$ is a compact
set, we may choose a finite number of indexes $1\leqslant
N_0<\infty$ and the corresponding points $z_1,\ldots, z_{N_0}\in
|\gamma_1|$ such that $|\gamma_1|\subset
B_0:=\bigcup\limits_{i=1}^{N_0}B(z_i, l_0/4).$
In this case,
$$|C^1_m|\subset U_1\cup |\gamma_1|\subset
B(\overline{x_1}, \delta_0)\cup \bigcup\limits_{i=1}^{N_0}B(z_i,
l_0/4)\,.$$
Let $\Gamma_m$ be a family of paths joining the sets $|C^1_m|$ and
$|C^2_m|$ in $D^{\,\prime}.$ Now we obtain that
\begin{equation}\label{eq10C}
\Gamma_m=\bigcup\limits_{i=0}^{N_0}\Gamma_{mi}\,,
\end{equation}
where $\Gamma_{mi}$ is a family of all paths $\gamma:[0,
1]\rightarrow D^{\,\prime}$ such that $\gamma(0)\in B(z_i,
l_0/4)\cap |C^1_m|$ and $\gamma(1)\in |C_2^m|$ for $1\leqslant
i\leqslant N_0.$ Similarly, let $\Gamma_{m0}$ be a family of all
paths $\gamma:[0, 1]\rightarrow D^{\,\prime}$ such that
$\gamma(0)\in B(\overline{x_1}, \delta_0)\cap |C^1_m|$ and
$\gamma(1)\in |C_2^m|.$ By~(\ref{eq12C}) there is some
$\sigma_0>\delta_0>0$ such that
$$
\overline{B(\overline{x_1},
\sigma_0)}\cap|\gamma_2|=\varnothing=\overline{U_2}\cap|\gamma_1|\,,
\quad \overline{B(\overline{x_1}, \sigma_0)}\cap
\overline{U_2}=\varnothing\,.$$
By~\cite[Theorem~1.I.5.46]{Ku}, we may show that
$$\Gamma_{m0}>\Gamma(S(\overline{x_1}, \delta_0), S(\overline{x_1}, \sigma_0), A(\overline{x_1}, \delta_0,
\sigma_0))\,,$$
\begin{equation}\label{eq11C}
\Gamma_{mi}>\Gamma(S(z_i, l_0/4), S(z_i, l_0/2), A(z_i, l_0/4,
l_0/2))\,.
\end{equation}
We may chose $x_*\in D^{\,\prime},$ $\delta_*>0$ and $\sigma_*>0$
such that $A(x_*, \delta_*, \sigma_*)\subset A(\overline{x_1},
\delta_0, \sigma_0).$ Thus,
$$\Gamma(S(\overline{x_1}, \delta_0), S(\overline{x_1}, \sigma_0),
A(\overline{x_1}, \delta_0, \sigma_0))>$$
\begin{equation}\label{eq1E*}
>\Gamma(S(x_*, \delta_*),
S(x_*, \sigma_*), A(x_*, \delta_*, \sigma_*))\,.
\end{equation}
Set
$$\eta(t)= \left\{
\begin{array}{rr}
4/l_0, & t\in [l_0/4, l_0/2],\\
0,  &  t\not\in [l_0/4, l_0/2]\,,
\end{array}
\right. $$$$\eta_0(t)= \left\{
\begin{array}{rr}
1/(\sigma_*-\delta_*), & t\in [\delta_*, \sigma_*],\\
0,  &  t\not\in [\delta_*, \sigma_*]\,.
\end{array}
\right.$$
Let $\Gamma^{\,*}_m:=\Gamma(|D_m^1|, |D_m^2|, D).$ Observe that
$f_m(\Gamma^{\,*}_m)\subset\Gamma_m.$ Now, by~(\ref{eq10C}),
(\ref{eq11C}) and~(\ref{eq1E*}) we obtain that
\begin{equation}\label{eq6A}
\Gamma^{\,*}_m>\left(\bigcup\limits_{i=1}^{N_0}\Gamma_{f_m}(z_i,
l_0/4, l_0/2)\right)\cup \Gamma_{f_m}(x_*, \delta_*, \sigma_*)\,.
\end{equation}
Since~$f_m$ satisfy the relation~(\ref{eq2*A}) for $Q=Q_m$ in
$D^{\,\prime},$ we obtain by~(\ref{eq6A}) that
\begin{equation}\label{eq14A}
M(\Gamma^{\,*}_m)\leqslant
(4^nN_0/l_0^n+(1/(\sigma_*-\delta_*))^n)\Vert Q_m\Vert_1\leqslant
c<\infty\,.
\end{equation}
We show that the relation~(\ref{eq14A}) contradicts the condition of
the weakly flatness of the mapped domain. In fact, by construction
$$h(|D^1_m|)\geqslant h(x_m, b_m^1) \geqslant
(1/2)\cdot h(f^{\,-1}_m(A), \partial D)>\delta/2\,,$$
\begin{equation}\label{eq14}
h(|D^2_m|)\geqslant h(x^{\,\prime}_m, b_m^2) \geqslant (1/2)\cdot
h(f^{\,-1}_m(A), \partial D)>\delta/2
\end{equation}
for all $m\geqslant \widetilde{M_0}$ and some $\widetilde{M_0}\in
{\Bbb N}.$
We put~$U:=B_h(x_0, r_0)=\{y\in \overline{{\Bbb R}^n}: h(y,
x_0)<r_0\},$ where $0<r_0<\delta/4$ and the number $\delta $ refers
to the ratio~(\ref{eq14}). Note that $|D^1_m|\cap U\ne\varnothing\ne
|D^1_m|\cap (D\setminus U)$ for every $m\in{\Bbb N},$ because
$h(|D^1_m|)\geqslant \delta/2$ and $x_m\in |D^1_m|,$ $x_m\rightarrow
x_0$ as $m\rightarrow\infty.$ Similarly, $|D^2_m|\cap
U\ne\varnothing\ne |D^2_m|\cap (D\setminus U).$ Since $|D^1_m|$ and
$|D^2_m|$ are continua,
\begin{equation}\label{eq8A}
|D^1_m|\cap \partial U\ne\varnothing, \quad |D^2_m|\cap
\partial U\ne\varnothing\,,
\end{equation}
see, e.g.,~\cite[Theorem~1.I.5.46]{Ku}. Let $c$ be the number
from~(\ref{eq14A}). Since $\partial D$ is weakly flat, for $P:=c>0$
there is a neighborhood $V\subset U$ of $x_0$ such that
\begin{equation}\label{eq9A}
M(\Gamma(E, F, D))>c
\end{equation}
for any continua $E, F\subset D$ such that $E\cap
\partial U\ne\varnothing\ne E\cap \partial V$ and $F\cap \partial
U\ne\varnothing\ne F\cap \partial V.$ Let us show that
\begin{equation}\label{eq10A}
|D^1_m|\cap \partial V\ne\varnothing, \quad |D^2_m|\cap
\partial V\ne\varnothing\end{equation}
for sufficiently large $m\in {\Bbb N}.$ Indeed, $x_m\in |D^1_m|$ and
$x^{\,\prime}_m\in |D^2_m|,$ where $x_m, x^{\,\prime}_m\rightarrow
x_0\in V$ as $m\rightarrow\infty.$ In this case, $|D^1_m|\cap
V\ne\varnothing\ne |D^2_m|\cap V$ for sufficiently large $m\in {\Bbb
N}.$ Observe that $h(V)\leqslant h(U)\leqslant 2r_0<\delta/2.$
By~(\ref{eq14}) $h(|D^1_m|)>\delta/2.$ Thus, $|D^1_m|\cap
(D\setminus V)\ne\varnothing$ and, therefore, $|D^1_m|\cap\partial
V\ne\varnothing$ (see, e.g.,~\cite[Theorem~1.I.5.46]{Ku}).
Similarly, $h(V)\leqslant h(U)\leqslant 2r_0<\delta/2.$
By~(\ref{eq14}) we obtain that $h(|D^2_m|)>\delta/2.$ Thus,
$|D^2_m|\cap (D\setminus V)\ne\varnothing.$
By~\cite[Theorem~1.I.5.46]{Ku} we obtain that $|D^2_m|\cap\partial
V\ne\varnothing.$ Thus, the relation~(\ref{eq10A}) is proved.
Combining the relations~(\ref{eq8A}), (\ref{eq9A}) and
(\ref{eq10A}), we obtain that $M(\Gamma^{\,*}_m)=M(\Gamma(|D^1_m|,
|D^2_m|, D))>c.$ The last relation contradicts the
inequality~(\ref{eq14A}), which proves the theorem.~$\Box$

\medskip
The following lemma was also proved earlier in somewhat other
situations, in particular, in the case of a fixed function $Q$ (see,
e.g., \cite[Lemma~4.1]{SevSkv$_2$}, \cite[Lemma~4.1]{SevSkv$_1$}
and~\cite[Lemma~6.1]{SSD}).

\medskip
\begin{lemma}\label{lem3}
{\sl\, Let $n\geqslant 2,$ and let $D$ and $D^{\,\prime}$ be domains
in ${\Bbb R}^n$ such that $D^{\,\prime}$ is locally connected on
$\partial D^{\,\prime},$ $D$ has a weakly flat boundary, and,
moreover, no connected component of $\partial D$ does not degenerate
into a point. Let $A$ be a nondegenerate continuum in
$D^{\,\prime},$ and let $\delta, M> 0.$ Assume that $f_m,$
$m=1,2,\ldots,$ be a sequence of discrete, open and closed mappings
of $D$ onto $D^{\,\prime},$ satisfying the relation~(\ref{eq2*A}) in
$D$ for some function $Q=Q_m,$ such that $f_m(A_m)=A$ for some
continuum $A_m\subset D$ with $h(A_m)\geqslant \delta>0,$
$m=1,2,\ldots .$ If $\int\limits_{D^{\,\prime}}Q(y)\,dm(y)\leqslant
M_0$ for $m=1,2,\ldots ,$ then there is $\delta_1>0$ such that
$$h(A_m,
\partial D)>\delta_1>0\quad \forall\,\, m\in {\Bbb
N}\,.$$
}
\end{lemma}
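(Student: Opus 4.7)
The statement is the variable-$Q$ analog of the results cited by the authors (\cite[Lemma~6.1]{SSD}, \cite[Lemma~4.1]{SevSkv$_1$}, \cite[Lemma~4.1]{SevSkv$_2$}), where the function $Q$ is fixed. The plan is to follow the scheme of those proofs, checking that the only place $Q$ enters as an upper bound is controlled uniformly by the $L^1$ estimate $\int_{D^{\,\prime}} Q_m(y)\,dm(y)\leqslant M_0$.

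I would argue by contradiction. Suppose, after passing to a subsequence still denoted $\{m\}$, that $h(A_m, \partial D)\to 0$. Choose $x_m\in A_m$ with $h(x_m, \partial D)\to 0$. Using compactness of $\overline{{\Bbb R}^n}$, a further subsequence gives $x_m\to x_0\in \partial D$. Since $h(A_m)\geqslant \delta$, I can pick $y_m\in A_m$ with $h(x_m, y_m)\geqslant \delta/2$ and, along a subsequence, $y_m\to y_0$ with $h(x_0, y_0)\geqslant \delta/2$. Then for small enough chordal neighborhoods $V\subset U$ of $x_0$ not containing $y_0$, the continuum $A_m$ meets both $\partial U$ and $\partial V$ for all sufficiently large $m$, by \cite[Theorem~1.I.5.46]{Ku}.

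The next step is to produce a second continuum $B_m\subset D$ that also meets $\partial U$ and $\partial V$ (or a similar annular pair around a nearby point), while being disjoint from $A_m$ or at least suitably separated. Here I would exploit the hypothesis that no component of $\partial D$ degenerates: the component $K_0\subset\partial D$ containing $x_0$ is a nondegenerate continuum, and one can use the local connectedness of $D^{\,\prime}$ at its boundary, together with the closedness/openness/discreteness of $f_m$, to lift a carefully chosen fixed path in $D^{\,\prime}$ connecting $A$ to an auxiliary fixed continuum or point, using \cite[Lemma~3.7]{Vu} for complete $f_m$-lifts, so as to build $B_m$ accumulating at a second point $x_0^*\in K_0$. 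The weak flatness of $\partial D$ then yields $M(\Gamma(A_m, B_m, D))>P$ for any preassigned $P>0$ once $m$ is large enough. On the other hand, $f_m(A_m)=A$ is fixed and $f_m(B_m)$ stays in a fixed compact subset of $D^{\,\prime}$ separated from $A$; covering $A$ and that second compact set by finitely many small balls and applying the inverse Poletsky inequality~(\ref{eq2*A}) to each family of the resulting decomposition of $\Gamma(A_m, B_m, D)$ (in the spirit of the estimate~(\ref{eq14A}) in the proof of Theorem~\ref{th2}) gives $M(\Gamma(A_m, B_m, D))\leqslant C\cdot M_0$ with $C$ depending only on the geometric data chosen, not on $m$. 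Selecting $P>C M_0$ contradicts the lower bound and completes the argument.

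The main technical obstacle is precisely the construction of the auxiliary continuum $B_m$: one must produce a continuum in $D$ whose image under $f_m$ is a fixed continuum in $D^{\,\prime}$ (or lies in a fixed compact set disjoint from $A$) and which clusters near a second boundary point of the nondegenerate component $K_0\subset\partial D$ through $x_0$. This is where the three structural hypotheses --- nondegeneracy of boundary components of $D$, local connectedness of $\partial D^{\,\prime}$, and the closed/open/discrete character of $f_m$ --- must intertwine, exactly as in \cite[Lemma~4.1]{SevSkv$_2$}. The genuinely new ingredient compared with those earlier proofs is merely that the Poletsky upper bound in the last step depends on $Q_m$ only through $\int Q_m\leqslant M_0$, so the estimate survives the passage from a fixed $Q$ to a uniformly $L^1$-bounded family.
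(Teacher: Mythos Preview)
Your overall architecture --- contradiction, an upper bound on $M(\Gamma(A_m,B_m,D))$ via~(\ref{eq2*A}) using only $\Vert Q_m\Vert_1\leqslant M_0$, and a lower bound from weak flatness --- matches the paper, and you correctly identify that the only novelty over the fixed-$Q$ case is that the upper bound depends on $Q_m$ solely through its $L^1$ norm.

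The gap is your construction of $B_m$. Lifting a fixed path in $D^{\,\prime}$ via \cite[Lemma~3.7]{Vu} gives no control on where in $D$ the lift lies; nothing forces such a lift near $x_0$ or near any prescribed point of $K_0$, so you cannot feed it into the weak-flatness lower bound, which requires \emph{both} continua to meet $\partial U$ and $\partial V$ for nested neighborhoods of a \emph{single} boundary point. The paper does not lift here. Instead it uses that each $f_{m_k}$, being closed, extends continuously to $\overline f_{m_k}:\overline D\to\overline{D^{\,\prime}}$ with $\overline f_{m_k}(\partial D)\subset\partial D^{\,\prime}$ (\cite[Theorem~3.1]{SSD}, \cite[Theorem~3.3]{Vu}). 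By uniform continuity on the compact $\overline D$ there is $\delta_k\to 0$ with $h(\overline f_{m_k}(x),\overline f_{m_k}(x_0))<\varepsilon$ whenever $h(x,x_0)<\delta_k$, where $\varepsilon<\tfrac12\, h(\partial D^{\,\prime},A)$. One then takes the chordal $\delta_k$-neighbor\-hood $B_k$ of the nondegenerate component $K_0\ni x_0$, finds via \cite[Lemma~2.2]{HK} an open $U_k\subset B_k$ with $U_k\cap D$ connected, and joins inside $U_k\cap D$ points near $x_0$ and near two extremal points $z_0,w_0\in K_0$ by a path $\gamma_k$. This $|\gamma_k|$ has diameter $\geqslant h(K_0)/2>0$, approaches $x_0$, and --- because every one of its points is $\delta_k$-close to $\partial D$ --- has $f_{m_k}$-image within $\varepsilon$ of $\partial D^{\,\prime}$, hence at distance $>\varepsilon$ from $A$. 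Covering only $A$ by finitely many balls $B(x_i,\varepsilon/4)$ then yields the uniform upper bound $M(\Gamma(A_{m_k},|\gamma_k|,D))\leqslant 4^nN_0\varepsilon^{-n}M_0$, contradicting weak flatness at $x_0$.
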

\begin{proof}
Due to the compactness of the space~$\overline{{\Bbb R}^n}$ the
boundary of the domain $D$ is not empty and compact, so that the
distance $h(A_m,
\partial D)$ is well-defined.

\medskip
We carry out the proof by contradiction. Suppose that the conclusion
of the lemma is not true. Then for each $k\in {\Bbb N}$ there is
some number $m=m_k\in {\Bbb N}$ such that $h(A_{m_k},
\partial D)<1/k.$ We may assume that $m_k$ increases on $k=1,2,\ldots .$
Since $A_{m_k}$ is a compact set, there are $x_k\in A_{m_k}$ and
$y_k\in
\partial D,$ $k=1,2,\ldots ,$ such that $h(A_{m_k},
\partial D)=h(x_k, y_k)<1/k$ (see Figure~\ref{fig3}).
\begin{figure}[h]
\centerline{\includegraphics[scale=0.6]{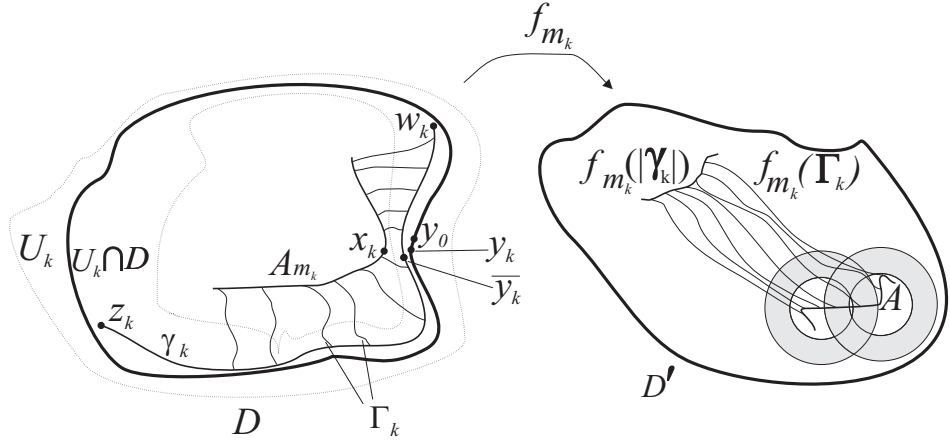}} \caption{To
the proof of Lemma~\ref{lem3}}\label{fig3}
\end{figure}
Since $\partial D$ is a compact set, we may consider that
$y_k\rightarrow y_0\in
\partial D$ as $k\rightarrow \infty.$ Then
%
%\begin{equation}\label{eq7}
$x_k\rightarrow y_0\in \partial D$ при $k\rightarrow \infty.$
%\end{equation}
%
Let $K_0$ be a connected component of $\partial D,$ containing
$y_0.$ Obviously, $K_0$ is a continuum in $\overline{{\Bbb R}^n}.$
Since $\partial D$ is weakly flat, by~\cite[Theorem~3.1]{SSD}
$f_{m_k}$ has a continuous extension
$\overline{f}_{m_k}:\overline{D}\rightarrow
\overline{D^{\,\prime}}.$ Moreover, $\overline{f}_{m_k}$ is
uniformly continuous in $\overline{D}$ for any fixed $k,$ because
$\overline{f}_{m_k}$ is continuous on a compact set~$\overline{D}.$
Now, for every $\varepsilon>0$ there is
$\delta_k=\delta_k(\varepsilon)<1/k$ such that
\begin{equation}\label{eq3B}
h(\overline{f}_{m_k}(x),\overline{f}_{m_k}(x_0))<\varepsilon
\end{equation}
$$
\forall\,\, x,x_0\in \overline{D},\quad h(x, x_0)<\delta_k\,, \quad
\delta_k<1/k\,.$$
Pick $\varepsilon>0$ such that
\begin{equation}\label{eq5D}
\varepsilon<(1/2)\cdot h(\partial D^{\,\prime}, A)\,.
\end{equation}
Denote $B_h(x_0, r)=\{x\in \overline{{\Bbb R}^n}: h(x, x_0)<r\}.$
Given $k\in {\Bbb N},$ put
$$B_k:=\bigcup\limits_{x_0\in K_0}B_h(x_0, \delta_k)\,,\quad k\in {\Bbb
N}\,.$$
Since $B_k$ is a neighborhood of the continuum $K_0,$
by~\cite[Lemma~2.2]{HK} there is a neighborhood $U_k$ of $K_0$ such
that $U_k\subset B_k$ and $U_k\cap D$ is connected. We may consider
that $U_k$ is open, so that $U_k\cap D$ is path connected
(see~\cite[Proposition~13.1]{MRSY}). Let $h(K_0)=m_0.$ Now, we can
find $z_0, w_0\in K_0$ such that $h(K_0)=h(z_0, w_0)=m_0.$ Thus,
there are sequences $\overline{y_k}\in U_k\cap D,$ $z_k\in U_k\cap
D$ and $w_k\in U_k\cap D$ such that $z_k\rightarrow z_0,$
$\overline{y_k}\rightarrow y_0$ and $w_k\rightarrow w_0$ as
$k\rightarrow\infty.$ We may consider that
\begin{equation}\label{eq2G}
h(z_k, w_k)>m_0/2\quad \forall\,\, k\in {\Bbb N}\,.
\end{equation}
Since $U_k\cap D$ is path connected, we can join the points $z_k,$
$\overline{y_k}$ and $w_k$ using some path $\gamma_k\in U_k\cap D.$
As usual, we denote by $|\gamma_k|$ a locus of the path $\gamma_k$
in $D.$ In this case, $f_{m_k}(|\gamma_k|)$ is a compact set
in~$D^{\,\prime}.$ If $x\in|\gamma_k|,$ there exists~$x_0\in K_0$
such that $x\in B(x_0, \delta_k).$ Put $\omega\in A\subset D.$ Since
$x\in|\gamma_k|$ and, in addition, $x$ is an inner point of $D,$ we
may use the notation $f_{m_k}(x)$ instead of
$\overline{f}_{m_k}(x).$ By~(\ref{eq3B}) and~(\ref{eq5D}), and by
the triangle inequality, we obtain that
$$h(f_{m_k}(x),\omega)\geqslant
h(\omega,
\overline{f}_{m_k}(x_0))-h(\overline{f}_{m_k}(x_0),f_{m_k}(x))\geqslant$$
\begin{equation}\label{eq4C}
\geqslant h(\partial D^{\,\prime}, A)-(1/2)\cdot h(\partial
D^{\,\prime}, A)=(1/2)\cdot h(\partial D^{\,\prime}, A)>\varepsilon
\end{equation}
for sufficiently large $k\in {\Bbb N}.$ Taking $\inf$ in the
relation~(\ref{eq4C}) over all $x\in |\gamma_k|$ and $\omega\in A,$
we obtain that
$h(f_{m_k}(|\gamma_k|), A)>\varepsilon,$ $k=1,2,\ldots .$ Since
$h(x, y)\leqslant |x-y|$ for any $x, y\in {\Bbb R}^n,$ we obtain
that
\begin{equation}\label{eq6B}
{\rm dist}\,(f_{m_k}(|\gamma_k|), A)>\varepsilon, \quad\forall\,\,
k=1,2,\ldots \,.
\end{equation}
We cover the continuum $A$ with balls $B(x, \varepsilon/4),$ $x\in
A.$ Since $A$ is a compact set, we may consider that $A\subset
\bigcup\limits_{i=1}^{N_0}B(x_i, \varepsilon/4),$ $x_i\in A,$
$i=1,2,\ldots, N_0,$ $1\leqslant N_0<\infty.$ By the definition,
$N_0$ depends only on $A,$ in particular, $N_0$ does not depend
on~$k.$ Set
\begin{equation}\label{eq5C}
\Gamma_k:=\Gamma(A_{m_k}, |\gamma_k|, D)\,.
\end{equation}
Let $\Gamma_{ki}:=\Gamma_{f_{m_k}}(x_i, \varepsilon/4,
\varepsilon/2),$ in other words, $\Gamma_{ki}$ consists from all
paths $\gamma:[0, 1]\rightarrow D$ such that $f_{m_k}(\gamma(0))\in
S(x_i, \varepsilon/4),$ $f_{m_k}(\gamma(1))\in S(x_i,
\varepsilon/2)$ and $\gamma(t)\in A(x_i, \varepsilon/4,
\varepsilon/2)$ for $0<t<1.$ Let us show that
\begin{equation}\label{eq6C}
\Gamma_k>\bigcup\limits_{i=1}^{N_0}\Gamma_{ki}\,.
\end{equation}
Indeed, let $\widetilde{\gamma}\in \Gamma_k,$ in other words,
$\widetilde{\gamma}:[0, 1]\rightarrow D,$ $\widetilde{\gamma}(0)\in
A_{m_k},$ $\widetilde{\gamma}(1)\in |\gamma_k|$ and
$\widetilde{\gamma}(t)\in D$ for $0\leqslant t\leqslant 1.$ Then
$\gamma^{\,*}:=f_{m_k}(\widetilde{\gamma})\in \Gamma(A,
f_{m_k}(|\gamma_k|), D^{\,\prime}).$ Since the balls $B(x_i,
\varepsilon/4),$ $1\leqslant i\leqslant N_0,$ form a covering of the
compactum $A,$ there is $i\in {\Bbb N}$ such that
$\gamma^{\,*}(0)\in B(x_i, \varepsilon/4)$ and $\gamma^{\,*}(1)\in
f_{m_k}(|\gamma_k|).$ By~(\ref{eq6B}), $|\gamma^{\,*}|\cap B(x_i,
\varepsilon/4)\ne\varnothing\ne |\gamma^{\,*}|\cap
(D^{\,\prime}\setminus B(x_i, \varepsilon/4)).$ Thus,
by~\cite[Theorem~1.I.5.46]{Ku} there is $0<t_1<1$ such that
$\gamma^{\,*}(t_1)\in S(x_i, \varepsilon/4).$ We may consider that
$\gamma^{\,*}(t)\not\in B(x_i, \varepsilon/4)$ for $t>t_1.$ Set
$\gamma_1:=\gamma^{\,*}|_{[t_1, 1]}.$ By~(\ref{eq6B}) it follows
that $|\gamma_1|\cap B(x_i, \varepsilon/2)\ne\varnothing\ne
|\gamma_1|\cap (D\setminus B(x_i, \varepsilon/2)).$ Thus,
by~\cite[Theorem~1.I.5.46]{Ku} there is $t_1<t_2<1$ such that
$\gamma^{\,*}(t_2)\in S(x_i, \varepsilon/2).$ We may consider that
$\gamma^{\,*}(t)\in B(x_i, \varepsilon/2)$ for any $t<t_2.$ Putting
$\gamma_2:=\gamma^{\,*}|_{[t_1, t_2]},$ we observe that $\gamma_2$
is a subpath of $\gamma^{\,*},$ which belongs to $\Gamma(S(x_i,
\varepsilon/4), S(x_i, \varepsilon/2), A(x_i, \varepsilon/4,
\varepsilon/2)).$

Finally, $\widetilde{\gamma}$ has a subpath
$\widetilde{\gamma_2}:=\widetilde{\gamma}|_{[t_1, t_2]}$ such that
$f_{m_k}\circ\widetilde{\gamma_2}=\gamma_2,$ wherein
$$\gamma_2\in \Gamma(S(x_i, \varepsilon/4), S(x_i, \varepsilon/2),
A(x_i, \varepsilon/4, \varepsilon/2))\,.$$ Thus, the
relation~(\ref{eq6C}) is fulfilled. Set
$$\eta(t)= \left\{
\begin{array}{rr}
4/\varepsilon, & t\in [\varepsilon/4, \varepsilon/2],\\
0,  &  t\not\in [\varepsilon/4, \varepsilon/2]\,.
\end{array}
\right. $$
Observe that $\eta$ satisfies~(\ref{eqA2}) for $r_1=\varepsilon/4$
and $r_2=\varepsilon/2.$ Since a mapping $f_{m_k}$ satisfies the
relation~(\ref{eq2*A}) for $Q=Q_{m_k}$, $k=1,2,\ldots ,$ putting
here $y_0=x_i,$ we obtain that
\begin{equation}\label{eq8C}
M(\Gamma_{f_{m_k}}(x_i, \varepsilon/4, \varepsilon/2))\leqslant
(4/\varepsilon)^n\cdot\Vert Q_{m_k}\Vert_1\leqslant
(4/\varepsilon)^nM_0 <\infty\,,
\end{equation}
where $\Vert Q_{m_k}\Vert_1$ is $L_1$-norm of the function $Q_{m_k}$
in $D^{\,\prime}.$ By~(\ref{eq6C}) and (\ref{eq8C}), taking into
account the subadditivity of the modulus of families of paths, we
obtain that
\begin{equation}\label{eq4B*}
M(\Gamma_k)\leqslant
\frac{4^nN_0}{\varepsilon^n}\int\limits_{D^{\,\prime}}Q_{m_k}(y)\,dm(y)\leqslant
c<\infty\,,
\end{equation}
where $c$ is some positive constant. Arguing similarly to the proof
of relations~(\ref{eq14}), and using the condition~(\ref{eq2G}), we
obtain that $M(\Gamma_k)\rightarrow\infty$ as $k\rightarrow\infty,$
that contradicts to~(\ref{eq4B*}). The contradiction obtained above
prove the lemma.~$\Box$
\end{proof}

\medskip
Given domains $D, D^{\,\prime}\subset {\Bbb R}^n,$ points $a\in D,$
$b\in D^{\,\prime}$ and a number $M_0>0 $ denote by ${\frak S}_{a,
b, M_0}(D, D^{\,\prime})$ the family of all open, discrete, and
closed mappings $f$ of $D$ onto $D^{\,\prime}$ satisfying the
condition~(\ref{eq2*A}) with some $Q=Q_f,$ $\Vert
Q\Vert_{L^1(D^{\,\prime})}\leqslant M_0,$ for every $y_0\in f(D),$
such that $f(a)=b.$ The following statement is proved
in~\cite[Theorem~7.1]{SSD} in the case of a fixed function~$Q.$

\medskip
\begin{theorem}\label{th4}
{\sl Suppose that a domain $D$ has a weakly flat boundary, none of
the connected components of which is degenerate. If $D^{\,\prime}$
is locally connected at its boundary, then each map $f\in {\frak
S}_{a, b, M_0}(D, D^{\,\prime})$ has a continuous boundary extension
$\overline{f}:\overline{D}\rightarrow \overline{D^{\,\prime}}$ such
that $\overline{f}(\overline{D})=\overline{D^{\,\prime}}$ and, in
addition, the family ${\frak S}_{a, b, M_0}(\overline{D},
\overline{D^{\,\prime}})$ of all extended mappings
$\overline{f}:\overline{D}\rightarrow \overline{D^{\,\prime}}$ is
equicontinuous in $\overline{D}.$ }
\end{theorem}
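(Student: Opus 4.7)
The plan is to imitate the contradiction argument used in the proof of Theorem~\ref{th2}, with Lemma~\ref{lem3} taking the role of the defining condition $h(f^{\,-1}(A),\partial D)\geqslant\delta$ from the class ${\frak S}_{\delta,A,M_0}$. The continuous boundary extension of each $f\in{\frak S}_{a,b,M_0}(D,D^{\,\prime})$, the surjectivity $\overline{f}(\overline{D})=\overline{D^{\,\prime}}$, and the equicontinuity of the family on compact subsets of $D$ all follow from \cite[Theorems~1.1 and~3.1]{SSD}; what remains is to prove equicontinuity at every point $x_0\in\partial D$.

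The key preparatory step is to upgrade the pointwise normalization $f(a)=b$ to a non-degenerate continuum condition of the type that Lemma~\ref{lem3} can digest. I would fix a path $\sigma\colon[0,1]\to D^{\,\prime}$ with $\sigma(0)=b$ and $h(\sigma(1),b)>\eta_0$, where $\eta_0>0$ is a chordal radius for which interior equicontinuity at $a$ provides a number $\rho_0>0$ with $f(B(a,\rho_0))\subset B_h(b,\eta_0)$ for every $f\in{\frak S}_{a,b,M_0}$. Setting $A:=|\sigma|$ and applying complete path lifting for open, discrete and closed mappings (see \cite[Lemma~3.7]{Vu}) to each $f$, one obtains a whole lift $\widetilde{\sigma}_f\colon[0,1]\to D$ with $\widetilde{\sigma}_f(0)=a$ and $f\circ\widetilde{\sigma}_f=\sigma$. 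Since $\widetilde{\sigma}_f(1)\in f^{\,-1}(\sigma(1))$ cannot lie in $B(a,\rho_0)$, the continuum $A_f:=|\widetilde{\sigma}_f|$ satisfies $f(A_f)=A$ and has chordal diameter bounded below by a positive constant depending only on $a$ and $\rho_0$, uniformly in $f$. Lemma~\ref{lem3} then provides a constant $\delta_1>0$, independent of $f$, with $h(A_f,\partial D)\geqslant\delta_1$.

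With this bound in hand, the rest of the argument would follow the scheme of Theorem~\ref{th2}. Assuming by contradiction that the extended family is not equicontinuous at some $x_0\in\partial D$, I would extract sequences $x_m,x_m^{\,\prime}\to x_0$ with $h(f_m(x_m),f_m(x_m^{\,\prime}))\geqslant\varepsilon_0$, select limits $\overline{x_1}\ne\overline{x_2}$ of $f_m(x_m),f_m(x_m^{\,\prime})$, and join suitably chosen points $\widetilde{x_i}:=\sigma(s_i)\in A$ to $\overline{x_i}$ by disjoint paths $\gamma_1,\gamma_2$ as in \cite[Lemma~2.1]{SevSkv$_1$}. The concatenated paths $C^1_m,C^2_m$ would then be lifted so that the resulting continua $D^1_m,D^2_m$ meet $A_{f_m}$ at their interior endpoints $b^i_m$, which therefore satisfy $h(b^i_m,\partial D)\geqslant\delta_1$ uniformly. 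From here the lower bound on $h(|D^i_m|)$ via the triangle inequality, the modular upper bound $M(\Gamma^{\,*}_m)\leqslant c<\infty$ coming from the inverse Poletsky inequality, and the final contradiction with the weak flatness of $\partial D$ all carry over practically verbatim from the proof of Theorem~\ref{th2}.

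The main difficulty I anticipate is precisely this last lifting step: in Theorem~\ref{th2} the lifts started at $x_m$ near $\partial D$ and ended automatically inside $f_m^{\,-1}(A)$, so the defining condition of ${\frak S}_{\delta,A,M_0}$ bounded every such endpoint away from $\partial D$ for free; here one must instead arrange the lifts so that their interior endpoints sit inside the specific continuum $A_{f_m}$ rather than on some other component of $f_m^{\,-1}(A)$ that may come arbitrarily close to $\partial D$. I expect this to be resolved by starting the lift of $\gamma_i$ from the point $\widetilde{\sigma}_{f_m}(s_i)\in A_{f_m}$ and then using uniqueness and reversibility of whole path liftings for open, discrete and closed mappings to match the subsequent lifts of $\alpha_m$ and $\beta_m$ with the required endpoints $x_m,x_m^{\,\prime}$.
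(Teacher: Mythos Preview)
Your overall plan coincides with the paper's: cite \cite[Theorems~1.1 and~3.1]{SSD} for the extension, the surjectivity, and the interior equicontinuity, then treat boundary equicontinuity by contradiction via a lift through $a$ of a fixed arc $A=|\alpha|\subset D^{\,\prime}$ together with Lemma~\ref{lem3}. Two differences are worth recording.

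First, the paper is much shorter at the end: having obtained $h(A_m,\partial D)\geqslant\delta_1$ it simply \emph{invokes} Theorem~\ref{th2} and stops, rather than rerunning that argument. It also handles the size of $A_m$ by a dichotomy --- either $h(A_m)\to 0$, in which case $A_m\ni a$ collapses to the interior point $a$ and $h(A_m,\partial D)\geqslant\delta$ trivially, or $h(A_{m_k})\geqslant\delta_0$ along a subsequence and Lemma~\ref{lem3} applies. Your device of using interior equicontinuity at $a$ to force $h(A_f)$ uniformly positive is neater and removes this case split; that is a genuine simplification over the paper.

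Second, your final paragraph contains a real error. Whole $f$-liftings for open, discrete, closed maps are \emph{not} unique --- these are not covering maps --- so ``uniqueness and reversibility of whole path liftings'' is unavailable: the lift of $C^1_m$ that starts at $\widetilde{\sigma}_{f_m}(s_1)\in A_{f_m}$ will in general terminate at some preimage of $f_m(x_m)$ other than $x_m$, and then the lifted continuum need not approach $x_0$ at all. The difficulty you flag is genuine (and, read literally, the paper's appeal to Theorem~\ref{th2} has the same gap, since that theorem asks for $h(f^{-1}(A),\partial D)\geqslant\delta$ for the \emph{full} preimage, not merely for the particular lift $A_m$), but your proposed remedy does not close it. The intended reading of the paper is that the \emph{proof} of Theorem~\ref{th2} is being reused: keep the lifts $D^i_m$ starting at $x_m,\,x_m^{\,\prime}$ exactly as there; if $h(|D^i_m|)$ stays bounded below, the weak-flatness contradiction goes through verbatim, while if $h(|D^i_m|)\to 0$ along a subsequence one continues the lift through its far endpoint $b^i_m$ by a lift of $\alpha$ to manufacture continua mapping onto $A$ and accumulating at $\partial D$, and then appeals once more to the mechanism of Lemma~\ref{lem3}. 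That second branch still needs an argument; your ``uniqueness'' shortcut does not supply it.
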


\medskip
\begin{proof} The equicontinuity of the family ${\frak S}_{a, b, M_0}(D,
D^{\,\prime}),$ the possibility of continuous boundary extension of
$f\in{\frak S}_{a, b, M_0}(D, D^{\,\prime})$ and the equality
$\overline{f}(\overline{D})=\overline{D^{\,\prime}}$ follow from
~\cite[Theorems~1.1 and 3.1]{SSD}. It remains to establish the
equicontinuity of the family of extended mappings
$\overline{f}:\overline{D}\rightarrow \overline{D^{\,\prime}}$ at
the boundary points of the domain~$D.$

\medskip
We will prove this statement from the contrary. Assume that the
family ${\frak S}_{a, b, M_0}(\overline{D},
\overline{D^{\,\prime}})$ is not equicontinuous at some point
$x_0\in\partial D.$ Then there are points $x_m\in D,$ $m=1,2,\ldots
,$ and mappings $f_m\in {\frak S}_{a, b, M_0}(\overline{D},
\overline{D^{\,\prime}}),$ such that $x_m\rightarrow x_0$ as
$m\rightarrow\infty,$ and
\begin{equation}\label{eq15}
h(f_m(x_m), f_m(x_0))\geqslant\varepsilon_0\,,\quad m=1,2,\ldots\,,
\end{equation}
for some $\varepsilon_0>0.$ Choose $y_0\in D^{\,\prime},$ $y_0\ne
b,$ and join it with the point $b$ by some path $\alpha$ in
$D^{\,\prime}.$ Put $A:=|\alpha|.$ Let $A_m$ be a whole
$f_m$-lifting of $\alpha $ starting at $a$ (it exists
by~\cite[Lemma~3.7]{Vu}). Note that $h(A_m,
\partial D)>0$ by the closeness of the mapping~$f_m.$
The following cases are now possible: either $h(A_m)\rightarrow 0$
as $m\rightarrow\infty,$ or $h(A_{m_k})\geqslant\delta_0>0$ as
$k\rightarrow\infty$ for some increasing sequence of numbers $m_k,$
$k=1,2,\ldots, $ and some $\delta_0>0.$

In the first case, obviously, $h(A_m, \partial D)\geqslant \delta>0$
for some $\delta>0.$ Now, the family ${\{f_m\}}^{\infty}_{m=1}$ is
equicontinuous at $x_0$ by Theorem~\ref{th2}, which contradicts the
condition~(\ref{eq15}).

In the second case, if $h(A_{m_k})\geqslant\delta_0>0$ as
$k\rightarrow\infty,$ then by Lemma~\ref{lem3} we also have that
$h(A_{m_k}, \partial D)\geqslant \delta_1>0$ for some $\delta_1> 0.$
Again, by Theorem~\ref{th2}, the family
${\{f_{m_k}\}}^{\infty}_{k=1}$ is equicontinuous at $x_0,$ and this
contradicts the condition~(\ref{eq15}).

The contradiction obtained above indicates that the assumption
concerning the absence of equicontinuity of the family ${\frak
S}_{a, b, M_0}(D, D^{\,\prime})$ on $\overline{D}$ is incorrect.
Theorem proved.~$\Box$
\end{proof}

\section{Proof of Theorem~\ref{th2A}}

{\textbf I.} Let $f_m\in \frak{F}^{\mathcal{M}}_{\varphi, \Phi,
z_0}(D),$ $m=1,2,\ldots .$ By Stoilow's factorization theorem (see,
e.g., \cite[5(III).V]{St}) a mapping $f_m$ has a representation
\begin{equation}\label{eq2E}
f_m=\varphi_m\circ g_m\,,
\end{equation}
where $g_m$ is some homeomorphism, and $\varphi_m$ is some analytic
function. By Lemma~1 in~\cite{Sev$_2$}, the mapping $g_m$ belongs to
the Sobolev class $W_{\rm loc}^{1, 1}(D)$ and has a finite
distortion. Moreover, by~\cite[(1).C, Ch.~I]{A}
\begin{equation}\label{eq1B}
{f_m}_z={\varphi_m}_z(g_m(z)){g_m}_z,\qquad
{f_m}_{{\overline{z}}}={\varphi_m}_z(g_m(z)){g_m}_{\overline{z}}
\end{equation}
for almost all $z\in D.$ Therefore, by the relation~(\ref{eq1B}),
$J(z, g_m)\ne 0$ for almost all $z\in D,$ in addition,
$K_{\mu_{f_m}}(z)=K_{\mu_{g_m}}(z).$

\medskip
\textbf{II.} We prove that $\partial g_m (D)$ contains at least two
points. Suppose the contrary. Then either $g_m(D)={\Bbb C},$ or
$g_m(D)={\Bbb C}\setminus \{a\},$ where $a\in {\Bbb C}.$ Consider
first the case $g_m(D)={\Bbb C}.$ By Picard's theorem
$\varphi_m(g_m(D))$ is the whole plane, except perhaps one point
$\omega_0\in {\Bbb C}.$ On the other hand, for every $m=1,2,\ldots$
the function $u_m(z):={\rm Re}\,f_m(z)={\rm
Re}\,(\varphi_m(g_m(z)))$ is continuous on the compact set
$\overline{D}$ under the condition~(\ref{eq1A}) by the continuity
of~$\varphi.$ Therefore, there exists $C_m>0$ such that $|{\rm
Re}\,f_m(z)|\leqslant C_m$ for any $z\in D,$ but this contradicts
the fact that $\varphi_m(g_m(D))$ contains all points of the complex
plane except, perhaps, one. The situation $g_m(D)={\Bbb C}\setminus
\{a\},$ $a\in {\Bbb C},$ is also impossible, since the domain
$g_m(D)$ must be simply connected in ${\Bbb C}$ as a homeomorphic
image of the simply connected domain $D.$

\medskip
Therefore, the boundary of the domain $g_m(D)$ contains at least two
points. Then, according to Riemann's mapping theorem, we may
transform the domain $g_m(D)$ onto the unit disk ${\Bbb D}$ using
the conformal mapping $\psi_m.$ Let $z_0\in D $ be a point from the
condition of the theorem. By using an auxiliary conformal mapping
$$\widetilde{\psi_m}(z)=\frac{z-(\psi_m\circ
g_m)(z_0)}{1-z\overline{(\psi_m\circ g_m)(z_0)}}$$ of the unit disk
onto itself we may consider that $(\psi_m\circ g_m)(z_0)=0.$ Now,
by~(\ref{eq2E}) we obtain that
\begin{equation}\label{eq2F}
f_m=\varphi_m\circ g_m= \varphi_m\circ\psi^{\,-1}_m\circ\psi_m\circ
g_m=F_m\circ G_m\,,\quad m=1,2,\ldots\,,
\end{equation}
where $F_m:=\varphi_m\circ\psi^{\,-1}_m,$ $F_m:{\Bbb D}\rightarrow
{\Bbb C},$ and $G_m=\psi_m\circ g_m.$
Obviously, a function $F_m$ is analytic, and $G_m$ is a regular
Sobolev homeomorphism in $D.$ In particular, ${\rm Im}\,F_m(0)=0$
for any $m\in {\Bbb N}.$

\medskip
\textbf{III.} We prove that the $L^1$-norms of the functions
$K_{\mu_{G_m}}(z)$ are bounded from above by some universal positive
constant $C> 0$ over all $m=1,2,\ldots .$ Indeed, by the convexity
of the function $\Phi$ in~(\ref{eq1D}) and by~\cite[Proposition~5,
I.4.3]{Bou},  the slope $\left[\Phi(t)-\Phi(0)\right]/t$ is a
non-decreasing function. Hence there exist constants $t_0>0$ and
$C_1>0$ such that
\begin{equation}\label{eq6D}
\Phi(t)\geqslant C_1\cdot t\qquad \forall\,\, t\in [t_0, \infty)\,.
\end{equation}
Fix $m\in {\Bbb N}\,.$ By~(\ref{eq1D}) and~(\ref{eq6D}), we obtain
that
$$\int\limits_D K_{\mu_{G_m}}(z)\,dm(z)=
\int\limits_{\{z\in D: K_{\mu_{G_m}}(z)<t_0\}}
K_{\mu_{G_m}}(z)\,dm(z)+\int\limits_{\{z\in D:
K_{\mu_{G_m}}(z)\geqslant t_0\}} K_{\mu_{G_m}}(z)\,dm(z)\leqslant$$
$$\leqslant t_0\cdot m(D)+\frac{1}{C_1}\int\limits_D
\Phi(K_{\mu_{G_m}}(z))\,dm(z)\leqslant$$$$\leqslant t_0\cdot
m(D)+\frac{\sup\limits_{z\in D}(1+|z|^2)^2}{C_1}\int\limits_D
\Phi(K_{\mu_{G_m}}(z))\cdot\frac{1}{(1+|z|^2)^2}\,dm(z)\leqslant$$
\begin{equation}\label{eq7A}
\leqslant t_0\cdot m(D)+\frac{\sup\limits_{z\in
D}(1+|z|^2)^2}{C_1}\mathcal{M}(D)<\infty\,,
\end{equation}
because $\mathcal{M}(D)<\infty$ by the assumption of the theorem.

\medskip
\textbf{IV.} We prove that each map $G_m,$ $m=1,2,\ldots ,$ has a
continuous extension to $\partial D,$ in addition, the family of
extended maps $\overline{G}_m,$ $m=1,2,\ldots ,$ is equicontinuous
in $\overline{D}.$ Indeed, as proved in item~\textbf{III},
$K_{\mu_{G_m}}\in L^1(D).$ By~\cite[Theorem~3]{KPRS} (see
also~\cite[Theorem~3.1]{LSS}) each $G_m,$ $m=1,2,\ldots, $ is a ring
$Q$-homeomorphism in $\overline{D}$ for $Q=K_{\mu_{G_m}}(z),$ where
$\mu$ is defined in~(\ref{eq2C}), and $K_{\mu}$ my be calculated by
the formula~(\ref{eq1}). Thus, the desired conclusion is the
statement of Lemma~\ref{lem2}.

\medskip
\textbf{V.} Let us prove that the inverse homeomorphisms
$G^{\,-1}_m,$ $m=1,2,\ldots ,$ have a continuous extension
$\overline{G}^{\,-1}_m$ to $\partial {\Bbb D},$ and
$\{\overline{G}_m^{\,-1}\}_{m=1}^{\infty}$ is equicontinuous in
$\overline{\Bbb D}.$ Since by the item~\textbf{IV} mappings $G_m,$
$m=1,2,\ldots, $ a ring $K_{\mu_{G_m}}(z)$-homeomorphisms in $D,$
the corresponding inverse mappings $G^{\,-1}_m$
satisfy~(\ref{eq2*A}). Since $G^{\,-1}_m(0)=z_0$ for any
$m=1,2,\ldots ,$ the possibility of a continuous extension of
$G^{\,-1}_m$ to $\partial {\Bbb D},$ and the equicontinuity of
${\{\overline{G}_m^{\,-1}\}}_{m=1}^{\infty}$ on $\overline{\Bbb D}$
follows by Theorem~\ref{th4}.

\medskip
\textbf{VI.} Since, as proved above the family
${\{G_m\}}^{\infty}_{m=1}$ is equicontinuous in~$D,$ according to
Arzela-Ascoli criterion there exists an increasing subsequence of
numbers $m_k,$ $k=1,2,\ldots ,$ such that $G_{m_k}$ converges
locally uniformly in $D$ to some continuous mapping $G:D\rightarrow
\overline{{\Bbb C}}$ as $k\rightarrow\infty$  (see, e.g.,
\cite[Theorem~20.4]{Va}). By Lemma~\ref{lem1}, either $G$ is a
homeomorphism with values in ${\Bbb R}^n,$ or a constant
in~$\overline{{\Bbb R}^n}.$ Let us prove that the second case is
impossible. Let us apply the approach used in proof of the second
part of Theorem~21.9 in~\cite{Va}. Suppose the contrary: let
$G_{m_k}(x)\rightarrow c=const$ as $k\rightarrow\infty.$ Since
$G_{m_k}(z_0)=0$ for all $k=1,2,\ldots ,$ we have that $c=0.$ By
item~\textbf{V}, the family of mappings~$G^{\,-1}_m,$ $m=1,2,\ldots
,$ is equicontinuous in ${\Bbb D}.$ Then
$$h(z, G^{\,-1}_{m_k}(0))=h(G^{\,-1}_{m_k}(G_{m_k}(z)), G^{\,-1}_{m_k}(0))\rightarrow 0$$
as $k\rightarrow\infty,$ which is impossible because $z$ is an
arbitrary point of the domain $D.$ The obtained contradiction
refutes the assumption made above.

\medskip
\textbf{VII.} According to~\textbf{V}, the family of mappings
$\{\overline{G}_m^{\,-1}\}_{m=1}^{\infty}$ is equicontinuous
in~$\overline{\Bbb D}.$ By the Arzela-Ascoli criterion (see, e.g.,
\cite[Theorem~20.4]{Va}) we may consider that
$\overline{G}^{\,-1}_{m_k}(y),$ $k=1,2,\ldots, $ converges to some
mapping $\widetilde{F}:\overline{{\Bbb D}}\rightarrow \overline{D}$
as $k\rightarrow\infty$ uniformly in~$\overline{D}.$ Let us to prove
that $\widetilde{F}=\overline{G}^{\,-1}$ as $k\rightarrow\infty.$
For this purpose, we show that~$G(D)={\Bbb D}.$ Fix $y\in {\Bbb D}.$
Since $G_{m_k}(D)={\Bbb D}$ for every $k=1,2,\ldots, $ we obtain
that $G_{m_k}(x_k)=y$ for some $x_k\in D.$ Since $D$ is bounded, we
may consider that $x_k\rightarrow x_0\in \overline{D}$ as
$k\rightarrow\infty.$ By the triangle inequality and the
equicontinuity of ${\{\overline{G}_m\}}^{\infty}_{m=1}$ in
$\overline{D},$ see~\textbf{IV}, we obtain that
$$|\overline{G}
(x_0)-y|=|\overline{G}(x_0)-\overline{G}_{m_k}(x_k)|\leqslant
|\overline{G}(x_0)-\overline{G}_{m_k}(x_0)|+|\overline{G}_{m_k}(x_0)-\overline{G}_{m_k}(x_k)|\rightarrow
0$$
as $k\rightarrow\infty.$ Thus, $\overline{G}(x_0)=y.$ Observe that
$x_0\in D,$ because $G$ is a homeomorphism. Since $y\in {\Bbb D}$ is
arbitrary, the equality $G(D)={\Bbb D}$ is proved. In this case,
$G^{\,-1}_{m_k}\rightarrow G^{\,-1}$ locally uniformly in ${\Bbb D}$
as $k\rightarrow\infty$ (see, e.g., \cite[Lemma~3.1]{RSS}). Thus,
$\widetilde{F}(y)=G^{\,-1}(y)$ for every $y\in {\Bbb D}.$

Finally, since $\widetilde{F}(y)=G^{\,-1}(y)$ for any $y\in {\Bbb
D}$ and, in addition, $\widetilde{F}$ has a continuous extension on
$\partial{\Bbb D},$ due to the uniqueness of the limit at the
boundary points we obtain that
$\widetilde{F}(y)=\overline{G}^{\,-1}(y)$ for $y\in \overline{{\Bbb
D}}.$ Therefore, we have proved
that~$\overline{G}^{\,-1}_{m_k}\rightarrow \overline{G}^{\,-1}$
uniformly in~$\overline{\Bbb D}$ as $k\rightarrow\infty.$

\medskip
\textbf{VIII.} By~\textbf{VII,} for $y=e^{i\theta}\in
\partial {\Bbb D}$
\begin{equation}\label{eq4E}
{\rm
Re\,}F_{m_k}(e^{i\theta})=\varphi\left(\overline{G}^{\,-1}_{m_k}(e^{i\theta})\right)\rightarrow
\varphi\left(\overline{G}^{\,-1}(e^{i\theta})\right)
\end{equation}
as $k\rightarrow\infty$ uniformly on $\theta\in [0, 2\pi).$ Since by
the construction ${\rm Im\,}F_{m_k}(0)=0$ for any $k=1,2,\ldots,$ by
the Schwartz formula (see, e.g., \cite[section~8.III.3]{GK}) the
analytic function $F_{m_k}$ is uniquely restored by its real part,
namely,
\begin{equation}\label{eq4A}
F_{m_k}(y)=\frac{1}{2\pi i}\int\limits_{S(0,
1)}\varphi\left(\overline{G}^{\,-1}_{m_k}(t)\right)\frac{t+y}{t-y}\cdot\frac{dt}{t}\,.
\end{equation}
Set
\begin{equation}\label{eq5A}
F(y):=\frac{1}{2\pi i}\int\limits_{S(0,
1)}\varphi\left(\overline{G}^{\,-1}(t)\right)\frac{t+y}{t-y}\cdot\frac{dt}{t}\,.
\end{equation}
Let $K\subset {\Bbb D}$ be an arbitrary compact set, and let $y\in
K.$ By~(\ref{eq4A}) and~(\ref{eq5A}) we obtain that
\begin{equation}\label{eq11A}
|F_{m_k}(y)-F(y)|\leqslant \frac{1}{2\pi}\int\limits_{S(0,
1)}\bigl|\varphi(\overline{G}^{\,-1}_{m_k}(t))-\varphi(\overline{G}
^{\,-1}(t))\bigr|\left|\frac{t+y}{t-y}\right|\,|dt|\,.
\end{equation}
Since $K$ is compact, there is $0<R_0=R_0(K)<0$ such that $K\subset
B(0, R_0).$ By the triangle inequality $|t+y|\leqslant 1+R_0$ and
$|t-y|\geqslant |t|-|y|\geqslant 1-R_0$ for $y\in K$ and any $t\in
{\Bbb S}^1.$ Thus
\begin{equation}\label{eq12D}
\left|\frac{t+y}{t-y}\right|\leqslant \frac{1+R_0}{1-R_0}:=M=M(K)\,.
\end{equation}
Put $\varepsilon>0.$ By~(\ref{eq4E}), for a number
$\varepsilon^{\,\prime}:=\frac{\varepsilon}{M}$ there is
$N=N(\varepsilon, K)\in {\Bbb N}$ such that
$\bigl|\varphi\left(\overline{G}^{\,-1}_{m_k}(t)\right)-\varphi\left
(\overline{G}^{\,-1}(t)\right)\bigr|<\varepsilon^{\,\prime}$ for any
$k\geqslant N(\varepsilon)$ and $t\in {\Bbb S}^1.$ Now,
by~(\ref{eq11A})
\begin{equation}\label{eq13A}
|F_{m_k}(y)-F(y)|<\varepsilon \quad \forall\,\,k\geqslant N\,.
\end{equation}
It follows from~(\ref{eq13A}) that the sequence $F_{m_k}$ converhes
to $F$ as $k\rightarrow\infty$ locally uniformly in the unit disk.
In particular, we obtain that ${\rm Im\,}F(0)=0.$ Note that $F$ is
analytic function in ${\Bbb D}$ (see remarks made at the end of
item~8.III in~\cite{GK}), and
$${\rm Re}\,F(re^{i\psi})=\frac{1}{2\pi}\int\limits_0^{2\pi}
\varphi\left(\overline{G}^{\,-1}(e^{i\theta})\right)\frac{1-r^2}{1-2r\cos(\theta-\psi)+r^2}\,d\theta$$
for $z=re^{i\psi}.$
By~\cite[Theorem~2.10.III.3]{GK}
\begin{equation}\label{eq15A}
\lim\limits_{\zeta\rightarrow z}{\rm
Re}\,F(\zeta)=\varphi(\overline{G}^{\,-1}(z))\quad\forall\,\,z\in\partial
{\Bbb D}\,.
\end{equation}
Observe that $F$ either is a constant or open and discrete (see,
e.g., \cite[Ch.~V,  I.6 and II.5]{St}). Thus, $f_{m_k}=F_{m_k}\circ
G_{m_k}$ converges to $f=F\circ G$ locally uniformly as
$k\rightarrow\infty,$ where $f=F\circ G$ either is a constant or
open and discrete. Moreover, by~(\ref{eq15A})
$${\rm Re\,}f(z)={\rm Re\,}F(\overline{G}(z))=\varphi(\overline{G}^{\,-1}(\overline{G}(z)))=\varphi(z)\,.$$
\textbf{IX.} Since by~\textbf{VI} $G$ is a homeomorphism,
by~\cite[Lemma~1 and Theorem~1]{L} $G$  is a regular solution of the
equation~(\ref{eq2C}) for some function~$\mu:{\Bbb C}\rightarrow
{\Bbb D}.$ Since the set of points of the function $F,$ where its
Jacobian is zero, consist only of isolated points (see~\cite[Ch.~V,
5.II and 6.II]{St}), $f$ is regular solution of the Dirichlet
problem~(\ref{eq2C})--(\ref{eq1A}) whenever $F\not\equiv const.$
Note that the relation~(\ref{e3.3.1}) holds for the corresponding
function $K_{\mu}=K_{\mu_f}$ (see e.g.~\cite[Lemma~1]{L}).
Therefore, $f\in\frak{F}^{\mathcal{M}}_{\varphi, \Phi,
z_0}(D).$~$\Box$

%=================Список литературы====================
%\end{fulltext}

\medskip
\medskip
{\bf \noindent Evgeny Sevost'yanov} \\
{\bf 1.} Zhytomyr Ivan Franko State University,  \\
40 Bol'shaya Berdichevskaya Str., 10 008  Zhytomyr, UKRAINE \\
{\bf 2.} Institute of Applied Mathematics and Mechanics\\
of NAS of Ukraine, \\
1 Dobrovol'skogo Str., 84 100 Slavyansk,  UKRAINE\\
esevostyanov2009@gmail.com

\medskip
{\bf \noindent Oleksandr Dovhopiatyi} \\
{\bf 1.} Zhytomyr Ivan Franko State University,  \\
40 Bol'shaya Berdichevskaya Str., 10 008  Zhytomyr, UKRAINE \\
alexdov1111111@gmail.com

\end{document}